\newtheorem{theorem}{Theorem}[section]
\newtheorem{lemma}[theorem]{Lemma}
\newtheorem{corollary}[theorem]{Corollary}
\theoremstyle{definition}
\newtheorem{definition}[theorem]{Definition}
\newtheorem{example}[theorem]{Example}
\theoremstyle{remark}
\newtheorem{remark}[theorem]{Remark}
\newtheorem{problem}{Problem}
\def \trop {\operatorname{trop}}
\def \hom {\operatorname{hom}}
\def \Hom {\operatorname{Hom}}
\def \HDE {\operatorname{HDE}}
\newcommand{\NU}{{\mathcal{N}_\mathcal{U}}}
\newcommand{\RR}{\mathbb{R}}
\newcommand{\NN}{\mathbb{N}}
\newcommand \U {\mathcal{U}}
\newcommand{\function}[2]{:#1 \longrightarrow #2}
\newcommand{\of}[1]{\left( #1 \right)}
\newcommand{\wt}{\widetilde}
\date{\today}
\title{On Domination Exponents for Pairs of Graphs}
\thanks{Annie Raymond was partially supported by NSF grants DMS-2054404 and DMS-2338532. Fan Wei was partially supported by NSF grant DMS-2401414. This material is based upon work supported by the National Science Foundation under Grant No.\ DMS-1928930, while the second and fourth authors were in
residence at the Simons Laufer Mathematical Sciences Institute in
Berkeley, California, during the Spring 2025 semester.}
\author{Grigoriy Blekherman}
\address{Georgia Institute of Technology}
\email{greg@math.gatech.edu}
\author{Annie Raymond}
\address{University of Massachusetts Amherst}
\email{annieraymond@umass.edu}
\author{Alexander Razborov}
\address{University of Chicago and Steklov Mathematical Institute}
\email{razborov@uchicago.edu
}
\author{Fan Wei}
\address{Duke University}
\email{fan.wei@duke.edu}
\begin{document}
\maketitle

\begin{abstract}
Understanding graph density profiles is notoriously challenging. Even for
pairs of graphs, complete characterizations are known only in very limited
cases, such as edges versus cliques. This paper explores a relaxation of
the graph density profile problem by examining the homomorphism density
domination exponent $C(H_1, H_2)$. This is the smallest real number $c \geq 0$
such that $t(H_1, T) \geq t(H_2, T)^c$ for all target graphs $T$ (if such a $c$ exists) where
$t(H,T)$ is the homomorphism density from $H$ to $T$. We demonstrate
that infinitely many families of graphs are required to realize $C(H_1,
H_2)$ for all connected graphs $H_1$, $H_2$. We derive the homomorphism density domination exponent for a variety
of graph pairs, including paths and cycles. As a couple of typical
examples, we obtain exact values when $H_1$ is an even cycle and $H_2$
contains a Hamiltonian cycle, and provide asymptotically sharp bounds when
both $H_1$ and $H_2$ are odd cycles.
\end{abstract}

\section{Introduction}
A graph \( H \) consists of a vertex set \( V(H) \) with cardinality $v(H)$ and an edge set \( E(H) \) with cardinality $e(H)$. Throughout this paper, we assume all graphs are simple, meaning they contain no loops or multiple edges. The \emph{number of homomorphisms} from a graph \( H \) to a target graph \( T \), denoted by \( \hom(H, T) \), is the number of vertex mappings from \( V(H) \) to \( V(T) \) that preserve adjacency, i.e., that map every edge of \( H \) to an edge of \( T \). The \emph{homomorphism density} from \( H \) to \( T \), denoted by
\[
t(H, T) := \frac{\hom(H, T)}{v(T)^{v(H)}},
\]
represents the probability that a random mapping from \( V(H) \) to \( V(T) \) yields a graph homomorphism.

Many classical and open problems in extremal combinatorics can be framed in terms of inequalities involving homomorphism densities. Specifically, one often seeks to determine whether a finite linear combination of densities is nonnegative for all graphs \( T \),
\[
\sum c_i t(H_i, T) \geq 0,
\]
 where \( c_i \in \mathbb{R} \) are fixed constants and \( H_i \)
are fixed subgraphs. Sophisticated techniques involving sums of squares and
semidefinite programming have been employed to address these questions (see
e.g.,~\cite{R07,FLS07, LS12}). These methods have significantly advanced our
understanding of asymptotic problems in extremal combinatorics.

A pivotal question in this area was posed by Lov\'asz and Razborov
(see~\cite[Problem 21]{LLP} and \cite[Question 2]{R07}, respectively): is
every valid linear inequality involving homomorphism densities expressible as
a sum of squares?  Given that the product of two densities satisfies
\[
t(G, T)t(H, T) = t(GH, T),
\]
where \( GH \) is the disjoint union of graphs \( G \) and \( H\), this
question is equivalent to asking the same question for {\em polynomial}
inequalities in homomorphism densities.

Hatami and Norin~\cite{HN} provided a strong negative answer by demonstrating
that determining whether a polynomial inequality involving homomorphism
densities holds for all graphs is undecidable. Subsequent work has extended
this undecidability result~\cite{BRW, MW}, highlighting the intrinsic
difficulty of such problems.

A slightly more general framework is based on the notion of graph profile:
given a finite set of distinct graphs \( \mathcal{U} = \{H_1, H_2, \dots, H_k\}
\), the associated \emph{graph density profile} \( \mathcal{D}_\mathcal{U} \) is
defined as the closure in $\RR^k$ of the set of \( k \)-tuples
\[
(t(H_1, T), t(H_2, T), \dots, t(H_k, T)),
\]
where \( T \) ranges over all possible graphs (or graphons). This set is
connected but it is known not to be always convex and the linear inequalities
problem is equivalent, by simple duality, to describing its convex hull.

Understanding graph profiles is notoriously challenging. For example, even
when \( |\mathcal{U}| = 2 \), complete characterizations are known only in a
limited number of cases. A cornerstone result in this direction is the
resolution when \( \mathcal{U} = \{K_r, K_2\} \), where one graph is a clique
and the other is an edge~\cite{triangle, Niki, Reiher}. For comparison, the
convex hull of this profile was described in a much earlier paper \cite{Bol}
using a significantly simpler (and very elegant) method.

Another natural relaxation of the general ``profile problem'' is to look at
the convex hull of its {\em logarithm} (defined coordinatewise).
Equivalently, one seeks to describe the set of all tuples $(c_1,\ldots,c_k)\in \mathbb{R}^k$
for which the inequality
\begin{equation}
    t(H_1,T)^{c_1}t(H_2,T)^{c_2}\ldots t(H_k, T)^{c_k} \geq 1 \label{eq:bound}
\end{equation}
holds for all target graphs $T$. We call such inequalities \emph{pure
binomial inequalities}. For various profiles, notably comprised of paths,
cycles and stars, these questions were studied by many authors, see e.g.,
\cite{Alo,Sid94,Stoner}. Remarkably, we do not know yet whether the analogue
of \cite{HN} is true in this context, that is if the set of all pure binomial
inequalities is decidable or not.

As has been shown in \cite{BRST}, the convex hull of the logarithm of the
coordinates is equal to the \emph{tropicalization} of
$\mathcal{D}_\mathcal{U}$, a concept in algebraic geometry \cite{tropalg},
which is defined to be $ \textup{trop}(\mathcal{D}_{\mathcal{U}}) := \lim_{t
\rightarrow 0} \log_{\frac{1}{t}} (\mathcal{D}_{\mathcal{U}})$ where
$\log_b(\mathcal{D}_{\mathcal{U}})$ for some base $b$ denotes the image of
$\mathcal{D}_{\mathcal{U}} \cap \RR^k_{>0}$ under the map $\mathbf{v} \mapsto
(\log_b v_1, \ldots, \log_b v_k)$. The tropicalization captures all valid pure
binomial inequalities on $\mathcal{D}_\mathcal{U}$; more specifically, the
dual of the cone $\textup{trop}(\mathcal{D}_\mathcal{U})$ is the set of all
tuples $(c_1,\ldots,c_k)\in \mathbb{R}^k$ for which the inequality
(\ref{eq:bound}) holds for all target graphs $T$. Previous works have used tropicalization in advancing the understanding of graph density profiles for cliques, paths and stars, and also graph homomorphism number profiles for even cycles and odd cycles separately ~\cite{BRST, BR}.

\medskip
Our paper almost exclusively deals with the case $k=2$. In this case,
we are trying to determine the sharpest polynomial inequality
\[
t(H_1,T) \geq t(H_2,T)^c
\]
that holds for all graphs \( T \). This problem leads to the concept of the graph density
{\it domination exponent}, which we formally define later in
Definition~\ref{def}. Intuitively, given two graphs \( H_1 \) and \( H_2 \),
the domination exponent \( C(H_1,H_2) \) is the smallest real
number \( c \) such that $ t(H_1,T) \geq t(H_2,T)^c $ for all target graphs \(
T \).

The notion of domination exponent has been studied implicitly in various
contexts for a long time, but was formally defined only more recently in
~\cite{KR} for homomorphism numbers, and in \cite{Stoner} for homomorphism densities\footnote{In \cite{Stoner}, the notation $\rho(H_2,
H_1)$ is used. We believe that our notation is slightly more natural since, as will
become clear later, this concept is more relevant to homomorphisms from $H_1$
to $H_2$ than the other way around.}. It has been recognized as a unifying concept for several
long-standing problems in extremal combinatorics. For instance, Sidorenko's
famous conjecture~\cite{Sid93}, which posits that
\[
t(H,T) \geq t(K_2,T)^{e(H)}
\]
for any bipartite graph \( H \), can be equivalently stated as asserting that
\( C(H, K_2) =e(H).\) Despite decades of effort, Sidorenko's conjecture
remains open.

In this paper, we explore several foundational questions about density domination
exponents and their applications. We also give several new computations for
specific graph pairs, notably cycles and paths. We also present open problems
and conjectures that we believe will stimulate further research in this area.

\subsection{Our Results}

The first question we address is which classes of target graphs $G$ are
sufficient to realize the values of $C(H_1, H_2)$. For example, Sidorenko's
conjecture states that if $H_1$ is a bipartite graph and $H_2$ is the edge
$K_2$, then $C(H_1, H_2)$ is achieved by the limit of the Erd\H{o}s-R\'enyi
graph $G(n,p)$ for any fixed $0 < p < 1$.

We show that in order to realize the values $C(H_1, H_2)$ for all connected
graphs $(H_1, H_2)$, we need infinitely many families of graphs $\{T_n\}$ so
that
\[
\lim_{n\to\infty}\frac{\log t(H_1, T_n)}{\log t(H_2, T_n)} = C(H_1,H_2).
\]
This remains true even when $H_1, H_2$ are confined to be even cycles. This
shows that to realize domination exponents for any pair of even cycles, we
need infinitely many constructions, which resolves \cite[Question
6.4]{Stoner} and is presented in
Theorem~\ref{thm:infinitelymanynotconnected}.

Secondly, we explore general methods  to determine the value of $C(H_1,
H_2)$, and explicitly compute it for some classes of graphs.

To that end, in Section~\ref{sec:misc}, we establish general facts about
graph homomorphisms and the homomorphism density domination exponent. We use
them to compute $C(H_1, H_2)$ under various general assumptions (see Theorem
\ref{thm:misc}). Additionally, in Theorem~\ref{lem:K4-eK3}, we determine that
$C(K_4 - e, K_3) = 2$, matching a lower bound proved by Tao \cite[Exercise
16.21]{LL12} \footnote{Tao proved a stronger result that we will cite in the
proof of Theorem \ref{lem:K4-eK3}.}.

The most technical part of our paper, where we also introduce several new
methods that may be of independent interest, is in Sections
\ref{sec:paths} and \ref{sec:cycles}. In Section~\ref{sec:paths}, we focus on the
cases where both $H_1$ and $H_2$ are paths. Previously, Stoner \cite{Stoner}
computed $C(P_k, P_\ell)$ for all $k$ and $\ell$, except when $k < \ell$ are
both odd and $k+1$ does not divide $\ell+1$ (where $P_m$ denotes the path with $m$ edges). We complete this missing case
and thus completely determine $C(P_k, P_\ell)$ for all $k, \ell$.
Specifically, using results from \cite{BR}, we show that
\[
C(P_k,P_\ell) = \frac{k+\ell-r}{(a+1)\ell},
\]
where $k < \ell$ are both odd and $\ell = a(k+1)+r,\ 0\leq r\leq k$.

In Section~\ref{sec:cycles}, we study the homomorphism density domination
exponent for cycles. The values of $C(C_k, C_\ell)$ are already known when $k
< \ell$ and $k$ is even \cite[Proposition 4.3]{Stoner}. We extend this result
by completely determining the domination exponent except in the case where
{\em both} are odd cycles and $k < \ell$. For this remaining case, we
establish both upper and lower bounds that are strong enough to
asymptotically determine $C(C_k, C_\ell)$, addressing a question posed by
Stoner \cite{Stoner}.

Finally, as a corollary to our results on cycles, we compute the tropicalization of the density profile for edges and even cycles. This allows us to efficiently compute $C(G,H)$ for any graphs $G$ and $H$ that are disjoint unions of edges and even cycles. These findings are presented in Section~\ref{sec:cycles}.

We conclude by presenting several open questions and conjectures in
Section~\ref{sec:open} that arise naturally from our work.

\section{Preliminaries and some general results}\label{sec:prelims}

In this section, we start with some general facts about graph homomorphisms
and homomorphism density domination exponents, and then show that any set of
optimizers for homomorphism domination exponents must be infinite, thus
solving \cite[Question 6.4]{Stoner}.

\subsection{General facts about homomorphisms and homomorphism domination exponents}

We begin right away with our main definition.

\begin{definition}\label{def}
For graphs $G$ and $H$, the \emph{homomorphism density domination exponent}
$C(G,H)$ is defined by\footnote{Since the set in question is an intersection
of closed sets and hence is itself closed, the minimum exists.}
$$C(G,H)= \min\{c\in \mathbb{R}\ |\ t(G,T) \geq t(H,T)^c \textup{ for all graphs } T\}.$$
Note that we can also express $C(G,H)$ as
$$C(G,H)= \sup_{0<t(H,T)<1} \frac{\log(t(G,T))}{\log(t(H,T))}.$$
\end{definition}

Thus any graph $T$ (or any sequence of graphs) yields a lower bound on
$C(G,H)$, and proving that $t(G,T)\geq t(H,T)^c$ is a valid inequality yields
an upper bound on $C(G,H)$, namely $c$.

\bigskip Next we proceed to describe several useful constructions on graphs
and their behavior with respect to homomorphism densities.

For a graph $G$ with $V(G)=\{v_1, \ldots, v_k\}$, we let $G'(a_1, \ldots, a_k)$ be the \emph{blow-up} of $G$ where

\begin{equation} \label{def:blowup}
\begin{gathered}
V(G'(a_1, \ldots, a_k)) =\{(v_i,j)|i\in [k], j\in [a_i]\},\\
E(G'(a_1, \ldots, a_k))  =\{\{(v_{i_1}, j_1),(v_{i_2},j_2)\}\in V(G'(a_1, \ldots, a_k)) \times V(G'(a_1, \ldots, a_k)) | \{v_{i_1}, v_{i_2}\}\in E(G)\}.
\end{gathered}
\end{equation}

We denote by $GH$ the disjoint union of graphs $G$ and $H$ and by $G\times H$ the categorical (tensor) product of $G$ and $H$, namely the graph with vertex set $V(G) \times V(H)$ and edge set $\{\{(u_1,v_1),(u_2,v_2)\} |$ $\{u_1,u_2\} \in E(G) \textup{ and } \{v_1,v_2\} \in E(H)\}$. One can easily check that $t(G,T)\cdot t(H,T) = t(GH, T)$ and that $t(G,T_1) \cdot t(G,T_2)= t(G, T_1\times T_2)$.

It will be also sometimes convenient for our purposes to use some of the
language related to the gluing algebra of graphs; we refer the reader to
Lov\'{a}sz~\cite{LL12} for a broader exposition.

A graph is {\em partially labeled} if a subset of its vertices is labeled
with elements of $\mathbb{N} := \{1,2,3,\ldots\}$ such that no vertex
receives more than one label. If no vertices of $H$ are labeled, then $H$ is
{\em unlabeled}.

Labeled graphs $H_1$ and $H_2$ can be multiplied ({\em glued}) as follows.
Given $H_1$ and $H_2$, we form the new labeled graph $H_1H_2$ by gluing
together the vertices in the two graphs with the same label, and keeping only
one copy of any edge that may have doubled in the process. (Note that if
$H_1$ and $H_2$ are unlabeled, $H_1H_2$ is just the disjoint union of the
graphs as above.)

For a labeled graph $H$, the (ordinary) graph $\llbracket H \rrbracket$ is
obtained from $H$ by removing all labels.

\bigskip
We now show a necessary and sufficient condition for $C(G,H)$ to exist. To do
so, we first need the following lemma.

\begin{lemma}\label{lem:blowupupperbound}
Let $H$ be a graph on $k$ vertices and $a_1, \ldots, a_k\in \NN^+$. Then
$t(H'(a_1, \ldots, a_k),T) \geq t(H,T)^{a_1a_2\ldots a_k}$ for every graph
$T$, where $H'$ is the blowup graph defined in (\ref{def:blowup}).
\end{lemma}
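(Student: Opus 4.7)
The plan is to express $t(H'(a_1,\ldots,a_k), T)$ probabilistically and then reduce it to $t(H, T)^{a_1 \cdots a_k}$ by applying Jensen's inequality one coordinate at a time.

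First I would set up the right expectation. Let $X_{i,j}$ for $i \in [k]$, $j \in [a_i]$ be iid uniform random vertices of $T$. The key identity I would establish is
\[
t(H'(a_1,\ldots,a_k), T) = \mathbb{E}\Bigl[\prod_{\mathbf{j} \in [a_1] \times \cdots \times [a_k]} Z_{\mathbf{j}}\Bigr],
\]
where $Z_{\mathbf{j}}$ is the indicator that the map $v_i \mapsto X_{i, j_i}$ is a homomorphism $H \to T$. This follows from the usual vertex-labeling expansion of $\hom(H'(a_1,\ldots,a_k), T)$: each edge constraint of the blowup appears at least once among the $Z_{\mathbf{j}}$'s, and since $\{0,1\}$-valued factors are idempotent, the repetitions do not change the product.

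Next I would perform a single-coordinate reduction. Setting $W_{j_1} := \prod_{(j_2,\ldots,j_k)} Z_{(j_1, j_2, \ldots, j_k)}$, the random variables $W_1, \ldots, W_{a_1}$ are conditionally iid given $X_{i, \cdot}$ for $i \geq 2$, since they depend on disjoint independent copies $X_{1, j_1}$. Consequently
\[
\mathbb{E}\Bigl[\prod_{j_1=1}^{a_1} W_{j_1}\Bigr] = \mathbb{E}\Bigl[\mathbb{E}[W_1 \mid X_{2,\cdot}, \ldots, X_{k,\cdot}]^{a_1}\Bigr] \geq \mathbb{E}[W_1]^{a_1},
\]
where the inequality is Jensen applied to the convex map $x \mapsto x^{a_1}$. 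Recognizing $\mathbb{E}[W_1]$ as $t(H'(1, a_2, \ldots, a_k), T)$ yields the single-coordinate reduction
\[
t(H'(a_1, a_2, \ldots, a_k), T) \geq t(H'(1, a_2, \ldots, a_k), T)^{a_1}.
\]

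Finally I would iterate this reduction by peeling off one coordinate at a time, chaining
\[
t(H'(a_1, \ldots, a_k), T) \geq t(H'(1, a_2, \ldots, a_k), T)^{a_1} \geq \cdots \geq t(H'(1, \ldots, 1), T)^{a_1 \cdots a_k} = t(H, T)^{a_1 \cdots a_k}.
\]
The main obstacle is the initial rewriting of the blowup density as a product indexed by multi-indices $\mathbf{j}$; once this identity is available, the rest is an iterated conditioning-plus-Jensen argument with straightforward bookkeeping.
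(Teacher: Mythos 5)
Your argument is correct, and it is essentially the same as the paper's: you unpack the gluing-algebra H\"older step (partially label all of $H$ except $v_1$, then glue $a_1$ copies) into an explicit condition-then-Jensen computation and iterate one vertex at a time, which is exactly what the paper does in its two-line proof. The only cosmetic difference is that you peel factors off $H'(a_1,\dots,a_k)$ rather than building up from $H$; the underlying single-vertex blow-up inequality is identical.
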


\begin{proof}
First assume that $a_1>1$ and $a_2=\ldots=a_k=1$. Let $G$ be the graph $H$
with all vertices labeled except for $v_1$. Then $t(H,T)=t(\llbracket
G\rrbracket,T)$ and $t(H'(a_1, 1, \ldots, 1),T)=t(\llbracket
G^{a_1}\rrbracket,T)$. By H\" older's inequality, we have that $t(H'(a_1, 1,
\ldots, 1),T) \geq t(H,T)^{a_1}$.

Now the statement follows by sequentially blowing up each vertex of $H$ and
using the previous argument.
\end{proof}

The next theorem provides a characterization of when $C(G,H)$ exists and a
(crude) upper bound on it when it does.
\begin{theorem}
    \label{lem:existence}
We have that $C(G,H)$ exists if and only if $t(G,H)>0$, in which case
$$
C(G,H)\leq \begin{cases}\of{\frac{v(G)}{v(H)}}^{v(H)} & \text{\rm if}\ v(H)\leq
\frac{v(G)}{e},\\
 e^{v(G)/e} & \text{\rm if}\  v(H)\geq
\frac{v(G)}{e}  \end{cases}
$$
{\rm (where }$e=2.7182\ldots${\rm )}.
\end{theorem}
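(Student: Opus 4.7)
The plan is to handle the two implications of the equivalence separately; the more substantive direction ($t(G,H)>0\Rightarrow$ existence) will simultaneously produce the explicit upper bound. For the necessity ``$C(G,H)$ exists $\Rightarrow t(G,H)>0$,'' I would simply substitute $T=H$ in the defining inequality $t(G,T)\geq t(H,T)^c$. Since the identity map is a homomorphism, $t(H,H)\geq v(H)^{-v(H)}>0$, so any finite $c$ forces $t(G,H)\geq t(H,H)^c>0$.

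For the converse together with the explicit bound, fix any homomorphism $\phi:G\to H$, which exists because $t(G,H)>0$. Let $H_0$ be the subgraph of $H$ induced on $\mathrm{im}(\phi)$, write $s=|\mathrm{im}(\phi)|$, and set $m_v=|\phi^{-1}(v)|$ for $v\in V(H_0)$. Consider the blowup $H':=H_0'(m_{v_1},\ldots,m_{v_s})$. Its vertex set is naturally identified with $V(G)$, and since $\phi$ is a homomorphism every edge of $G$ is also an edge of $H'$; that is, $G$ is a spanning subgraph of $H'$. I would then chain three inequalities valid for every target $T$:
\[
t(G,T)\ \geq\ t(H',T)\ \geq\ t(H_0,T)^{\prod_v m_v}\ \geq\ t(H,T)^{\prod_v m_v}.
\]
The first step uses that any homomorphism $H'\to T$ is automatically a homomorphism $G\to T$; the second is Lemma~\ref{lem:blowupupperbound}; the third uses the standard fact that $t(H_0,T)\geq t(H,T)$ whenever $H_0$ is an induced subgraph of $H$ (which follows from $\hom(H,T)\leq \hom(H_0,T)\cdot v(T)^{v(H)-v(H_0)}$), combined with monotonicity of $x\mapsto x^{\prod_v m_v}$ on $[0,1]$. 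This shows $C(G,H)\leq \prod_v m_v$ and, in particular, that $C(G,H)$ exists.

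To convert $\prod_v m_v$ into a bound depending only on $v(G)$ and $v(H)$, apply AM--GM to the $s$ positive integers $m_v$, whose sum is $v(G)$: $\prod_v m_v\leq (v(G)/s)^s$. The function $f(x)=(v(G)/x)^x$ is unimodal on $(0,\infty)$ with global maximum $e^{v(G)/e}$ attained at $x=v(G)/e$. Since $s\leq v(H)$, when $v(H)\leq v(G)/e$ the interval $[1,v(H)]$ lies in the increasing regime, giving $f(s)\leq f(v(H))=(v(G)/v(H))^{v(H)}$; when $v(H)\geq v(G)/e$, the crude global bound $f(s)\leq e^{v(G)/e}$ suffices. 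Combining with the previous paragraph yields exactly the two cases in the theorem statement.

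The one step needing careful bookkeeping is the identification of $V(H')$ with $V(G)$ and the verification that $G$ is a spanning subgraph of $H'$, which rests on the observation that $\phi(u)\neq\phi(w)$ whenever $\{u,w\}\in E(G)$ (otherwise a self-loop would be required). Beyond that, the AM--GM step and the unimodality analysis of $f$ are routine. I do not anticipate hidden obstacles, since the construction is essentially a clean packaging of the ``compose with $\phi$'' idea through the blowup lemma.
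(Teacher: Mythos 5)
Your proof is correct and follows essentially the same route as the paper: substitute $T=H$ for necessity, and for sufficiency compose with a homomorphism $\phi:G\to H$, invoke the blowup lemma (Lemma~\ref{lem:blowupupperbound}), apply AM--GM to $\prod_v m_v$, and optimize $f(s)=(v(G)/s)^s$ over $s\leq v(H)$ via its unimodality at $s=v(G)/e$. The only cosmetic difference is that you blow up the induced image $H_0$ and then pass from $t(H_0,T)$ to $t(H,T)$ in an extra step, whereas the paper blows up $H$ itself with weights $(a_1,\ldots,a_r,1,\ldots,1)$ and so lands on $t(H,T)^{\prod a_i}$ directly; both are valid and yield the identical bound.
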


\begin{proof}
$\Rightarrow$: Suppose that $t(G,H)=0$. If we let $T=H$, then there does not
exist a constant $c\in \RR$ such that $t(G,T)\geq t(H,T)^c$ since $t(H,H)>0$.

$\Leftarrow$: If $t(G,H)>0$, let us fix a homomorphism
$\varphi\function{V(G)}{V(H)}$. Assume without loss of generality that $V(H)=[n]$ and
$\text{im}(\varphi)=[r]$ for some $r\leq n$. Let $a_i=|\varphi^{-1}(i)|\geq 1$ so
that $\sum_{i=1}^ra_i=v(G)$. Then $G$ is a subgraph of
the blowup $H'(a_1,\ldots,a_r,1,\ldots,1)$ so by Lemma \ref{lem:basicproperties}(4)
below, Lemma \ref{lem:blowupupperbound} and the AM-GM inequality we conclude
$$
t(G,T)\geq t(H'(a_1,\ldots,a_r,1,\ldots,1),T) \geq t(H,T)^{\prod_{i=1}^ra_i} \geq t(H,T)^{\of{\frac{v(G)}{r}}^r}.
$$
It only remains to note that the function $\of{\frac{v(G)}{r}}^r$ is
increasing for $r\leq \frac{v(G)}{e}$ and decreasing for $r\geq
\frac{v(G)}{e}$.
\end{proof}

As in \cite{KR} and \cite{Stoner}, we now state a few basic properties of the
homomorphism density domination exponent that directly follow from
definitions.

\begin{lemma}[Basic properties]\label{lem:basicproperties}\leavevmode
\begin{enumerate}
\item Let $G\succeq H$ if $C(G,H)\geq 1$.Then $\succeq$ is a partial order on graphs.
\item $C(G^a,H^b) = \frac{a}{b}C(G,H)$.
\item $C(F,H) \leq C(F,G) \cdot C(G,H)$.
\item If $G$ is a subgraph of $H$, then $C(G,H)\leq 1$.
\end{enumerate}
\end{lemma}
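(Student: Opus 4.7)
The plan is to verify the four items by unfolding Definition~\ref{def} and combining it with the product formula $t(G^a, T) = t(G, T)^a$, together with the fact that $t(\cdot, T) \in [0, 1]$ so that raising both sides to a nonnegative power preserves the inequality direction.

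For item (4), if $G$ is a subgraph of $H$, then any homomorphism $\phi \colon H \to T$ restricts on $V(G) \subseteq V(H)$ to a homomorphism $G \to T$ (edges of $G$ are edges of $H$, so they are preserved), while each hom $G \to T$ has at most $v(T)^{v(H) - v(G)}$ extensions to all of $V(H)$; thus $\hom(H, T) \leq v(T)^{v(H) - v(G)} \hom(G, T)$, and dividing by $v(T)^{v(H)}$ yields $t(H, T) \leq t(G, T)$, so $C(G, H) \leq 1$. For item (2), the product formula turns the defining inequality $t(G^a, T) \geq t(H^b, T)^c$ into $t(G, T) \geq t(H, T)^{bc/a}$; taking the minimum $c$ that works gives $C(G^a, H^b) = \tfrac{a}{b} C(G, H)$.

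For item (3), the strategy is to chain the two defining inequalities: for every target $T$,
\[
t(F, T) \;\geq\; t(G, T)^{C(F, G)} \;\geq\; \bigl(t(H, T)^{C(G, H)}\bigr)^{C(F, G)} \;=\; t(H, T)^{C(F, G) \cdot C(G, H)},
\]
where the second step uses nonnegativity of $C(F, G)$ (immediate from the supremum representation, since both logarithms are negative so their ratio is positive) together with $t(H, T) \in (0, 1]$; minimizing over valid exponents then produces $C(F, H) \leq C(F, G) \cdot C(G, H)$.

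Item (1) reduces to verifying the three axioms of a partial order on isomorphism classes of graphs. Reflexivity is immediate from $C(G, G) = 1$, and transitivity follows directly from item (3). The main obstacle will be antisymmetry: if both $G \succeq H$ and $H \succeq G$ hold, then combining the two defining inequalities (in the borderline case $c = 1$) forces $t(G, T) = t(H, T)$ for every target graph $T$, after which Lov\'asz's classical result that the map $G \mapsto t(G, \cdot)$ separates nonisomorphic graphs forces $G \cong H$.
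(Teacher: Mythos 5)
Items (2), (3), and (4) are correct and essentially the routine arguments one would expect (and the paper itself gives no proof, calling these ``direct from definitions''); the one thing you should flag explicitly in (3) is that the chaining $t(G,T)^{C(F,G)} \ge t(H,T)^{C(G,H)\cdot C(F,G)}$ uses $C(F,G)\ge 0$, which you do note, and which holds because the set of valid exponents is an up-set containing $0$ only in the trivial case $t(F,T)\equiv 1$.

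Item (1) is where there is a genuine problem, and it is one you should have caught. The statement defines $G\succeq H$ by $C(G,H)\ge 1$, but your entire argument is for the order $C(G,H)\le 1$. With $C(G,H)\le 1$, the exponent $1$ lies in the (upward-closed) set of valid exponents, so $t(G,T)\ge t(H,T)$ for all $T$; combined with the reverse inequality from $C(H,G)\le 1$ one gets $t(G,T)=t(H,T)$ for all $T$, and transitivity does follow from item (3) because $C(F,H)\le C(F,G)C(G,H)\le 1$. None of this works for ``$\ge 1$'': $C(G,H)\ge 1$ does not say that $1$ is a valid exponent, so it gives you no pointwise comparison of $t(G,T)$ and $t(H,T)$, your ``borderline case $c=1$'' step is unjustified, and item (3) gives only an upper bound $C(F,H)\le C(F,G)C(G,H)$, which is vacuous for proving $C(F,H)\ge 1$. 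In fact the ``$\ge 1$'' relation is not antisymmetric: $C(K_2,P_2)=\nu^\ast(P_2)^{-1}=1\ge 1$ and $C(P_2,K_2)=2\ge 1$, yet $K_2\not\cong P_2$. So the inequality in the statement must be read as $C(G,H)\le 1$; you should say so explicitly rather than silently prove a different statement.

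One further small point in your antisymmetry step: $t(G,\cdot)$ does \emph{not} separate all nonisomorphic graphs --- adding an isolated vertex to $G$ leaves every $t(G,T)$ unchanged. Lov\'asz's separation theorem is for homomorphism \emph{numbers} $\hom(G,\cdot)$, not densities. So $C(G,H)\le 1$ and $C(H,G)\le 1$ only force $G$ and $H$ to agree up to isolated vertices, and ``partial order on graphs'' must be understood modulo this equivalence (or on graphs with no isolated vertices).
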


Next, we present simple general lower bounds for $C(G,H)$.

\begin{lemma}[\protect{\cite[Proposition 2.6]{Stoner}}]\label{lem:simple}

If $G$ and $H$ are connected graphs, then
\begin{equation}
  \label{eq:simplebound}
C(G, H) \geq \max\left( \frac{e(G)}{e(H)}, \ \ \ \frac{v(G)-1}{v(H)-1}, \ \ \ \frac{v(G)}{v(H)}\right).
\end{equation}
\end{lemma}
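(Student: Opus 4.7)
The plan is to establish each of the three lower bounds separately by exhibiting an explicit sequence of target graphs $T_n$ for which the ratio $\log t(G,T_n)/\log t(H,T_n)$ converges to the desired value. I will assume throughout that $v(G), v(H) \geq 2$, since otherwise the statement is easily verified directly. The elementary fact driving every computation is that, because $G$ and $H$ are connected on at least two vertices, any homomorphism from them into a graph $T$ lands inside a single connected component of $T$, so $\hom(F, T)$ decomposes as a sum over the components of $T$.

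For the bound $v(G)/v(H)$, I would take $T_n$ to be the disjoint union of $H$ with $n$ isolated vertices. By connectedness, $\hom(F,T_n) = \hom(F,H)$ for $F \in \{G,H\}$, and this is positive by Theorem~\ref{lem:existence}. Since $v(T_n) = n+v(H)$, we obtain $\log t(F,T_n) = -v(F)\log n + O(1)$, and the ratio tends to $v(G)/v(H)$.

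For the bound $(v(G)-1)/(v(H)-1)$, I would take $T_n$ to be the disjoint union of $n$ copies of $H$. Connectedness gives $\hom(F,T_n) = n\,\hom(F,H)$ and $v(T_n) = nv(H)$, so a direct computation yields $t(F,T_n) = t(F,H)\,n^{1-v(F)}$ and hence $\log t(F,T_n) = (1-v(F))\log n + O(1)$. The ratio therefore tends to $(1-v(G))/(1-v(H)) = (v(G)-1)/(v(H)-1)$.

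For the bound $e(G)/e(H)$, I would take $T_n$ to be a quasirandom graph on $n$ vertices of edge density $p \in (0,1)$, for instance a typical realisation of the Erd\H{o}s--R\'enyi graph $G(n,p)$. Standard subgraph-density concentration gives $t(F,T_n) \to p^{e(F)}$ for every fixed $F$, and since $\log p < 0$ the ratio tends to $e(G)/e(H)$. This last step is the only one with any subtlety: unlike the first two bounds, which are purely combinatorial one-line computations, the $e(G)/e(H)$ bound needs either probabilistic concentration of subgraph counts in $G(n,p)$ or an approximation of the constant graphon $W \equiv p$ by finite graphs in order to produce an explicit witness sequence. Combining the three limits yields $C(G,H) \geq \max\bigl(e(G)/e(H),\,(v(G)-1)/(v(H)-1),\,v(G)/v(H)\bigr)$.
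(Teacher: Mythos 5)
Your proof is correct and follows essentially the same strategy as the paper's: exhibit an explicit witness sequence $\{T_n\}$ for each of the three lower bounds and compute the limiting ratio. The paper's sketch uses $G(n,1/2)$, two disjoint copies of $K_n$, and $K_n$ plus $n$ isolated vertices (letting $n\to\infty$ inside the clique), whereas you use $G(n,p)$, $n$ disjoint copies of $H$, and $H$ plus $n$ isolated vertices (letting $n\to\infty$ in the multiplicity or padding). Both families of constructions realize the same three rays, and neither is meaningfully harder than the other; yours has the very slight advantage of making the dependence on connectedness of $G$ and $H$ transparent, while the paper's $K_n$-based gadgets avoid the need to assume $t(G,H)>0$. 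One small point to tidy: when you invoke Theorem~\ref{lem:existence} to justify $\hom(G,H)>0$, you are implicitly assuming $C(G,H)$ exists; it would be cleaner to note up front that if $t(G,H)=0$ then $C(G,H)=+\infty$ and the inequality is vacuous, so one may assume $t(G,H)>0$ throughout.
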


\begin{proof}
These lower bounds can be obtained from $G(n,1/2)$ (for $\frac{e(G)}{e(H)}$),
taking two disjoint copies of $K_n$ (for $\frac{v(G)-1}{v(H)-1}$) and
dilating $K_n$ with $n$ isolated vertices (for $\frac{v(G)}{v(H)}$).
\end{proof}

Let us say that a pair $(G, H)$ is {\it generalized Sidorenko} if $ C(G,H) =
e(G)/e(H)$. Many pairs of the form $(P_k,P_\ell)$ (\cite[Theorem
4.1]{Stoner}) and $(C_{2k},C_\ell)$ (\cite[Proposition 4.3]{Stoner}) are
generalized Sidorenko so this notion is not meaningless. On the other hand,
as a corollary of the above lemma, any pair $(G,H)$ such that
$\frac{e(G)}{e(H)} < \frac{v(G)-1}{v(H)-1}$, is not generalized Sidorenko. In
particular, if $G$ is a tree then $C(G,K_3)\geq \frac{e(G)}{2}$ (=
$C(G,K_2)$) so $(G,K_3)$ is very far from being generalized Sidorenko.

\subsection{What about the optimizers?}
Sidorenko's conjecture posits that if $H_1$ is a bipartite graph and $H_2$ is
the edge then $C(H_1, H_2)$ is achieved by the limit of the Erd\H{o}s-R\'enyi
graph $G(n,p)$ for any fixed $0 < p < 1$ or, in other words, by any
(non-trivial) constant graphon $W$. All three inequalities in Lemma
\ref{lem:simple} also employ one fixed graphon each. Hence a natural question
\cite[Question 6.4]{Stoner} is this: is the minimal set of optimizers needed
to realize all possible values of $C(H_1, H_2)$ finite? In the following
theorem, we answer it in the negative even when $H_1$ and $H_2$ are
restricted to be even cycles.

\begin{theorem}\label{thm:infinitelymanynotconnected}
Fix finitely many graph sequences $\{T_n^{(1)}\}, \{T_n^{(2)}\}, \dots,
\{T_n^{(N)}\}$, where for each $1 \leq i \leq N$, $\{T_n^{(i)}\} = T_1^{(i)},
T_2^{(i)}, \dots$ is a sequence of graphs or graphons. Then there exist even
cycles $G,H$ such that $\frac{\log(t(G,T_n^{(i)}))}{\log(t(H,T_n^{(i)}))}$
does not converge to $C(G,H)$ as $n\rightarrow \infty$ for any $1 \leq i \leq
N$.
\end{theorem}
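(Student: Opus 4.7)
My plan is to begin with a compactness reduction: since the space of graphons is compact in the cut metric and homomorphism densities $t(F,\cdot)$ are continuous functionals for each fixed graph $F$, I can pass to a subsequence of each $\{T_n^{(i)}\}$ so that it converges to some graphon $W_i$ with $t(C_{2m}, T_n^{(i)})\to t(C_{2m}, W_i)$ for every $m$. This reduces the goal to the following: given any $N$ graphons $W_1,\dots,W_N$, exhibit even cycles $C_{2k}, C_{2\ell}$ such that
\[
R_i(k,\ell):=\frac{\log t(C_{2k},W_i)}{\log t(C_{2\ell},W_i)} \neq C(C_{2k}, C_{2\ell})\quad\text{for every }i.
\]

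The next step is to reformulate each $R_i(k,\ell)$ spectrally. Writing $\{\lambda_j^{(i)}\}_{j\ge 1}$ for the eigenvalues of the integral operator on $L^2[0,1]$ with kernel $W_i$, one has $t(C_{2k},W_i)=\sum_j(\lambda_j^{(i)})^{2k}$, and the constraint $W_i\in[0,1]$ translates to a concrete restriction on the spectrum (for instance $\sum_j(\lambda_j^{(i)})^2\le 1$). The question of whether $R_i(k,\ell)$ matches $C(C_{2k},C_{2\ell})$ thus becomes a purely spectral condition that can be analyzed by treating the spectrum as a sequence and asking when a certain power-mean-style equality is attained.

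The third step is a counting/diagonalization argument. For each $i$, I would argue that the set of pairs $(k,\ell)$ at which $W_i$ is an optimizer for $C(C_{2k},C_{2\ell})$ is highly constrained by the spectrum $\{\lambda_j^{(i)}\}$ (e.g.\ pinned down by finitely many algebraic relations on the $\lambda_j^{(i)}$, or corresponding to specific divisibility constraints on $k$ and $\ell$ relative to the spectrum). Since we have only finitely many spectra, the union of their optimality sets cannot exhaust all pairs $(k,\ell)$, and one can pick $(k,\ell)$—for example large and with a well-chosen arithmetic relation between $k$ and $\ell$—lying outside all of them. To certify that a chosen $(k,\ell)$ witnesses strict inequality $R_i(k,\ell) < C(C_{2k},C_{2\ell})$, I would compare each $W_i$ against a purpose-built target graphon (for instance, a disjoint union of blow-ups tailored to the chosen $(k,\ell)$), which yields a strictly larger ratio and hence beats every $W_i$.

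The main obstacle is making the third step precise: one must show that the optimizer for $C(C_{2k}, C_{2\ell})$ genuinely depends non-trivially on the pair $(k,\ell)$, so that the optimality region of each fixed $W_i$ is truly sparse inside the lattice of pairs. Concretely, I expect this to require constructing, for each candidate $W_i$, an explicit witness target $T^{*}(k,\ell)$ whose log-ratio strictly exceeds $R_i(k,\ell)$, together with a combinatorial argument guaranteeing that a single $(k,\ell)$ can be found serving as such a witness against all $N$ graphons simultaneously—this is where I would expect the bulk of the technical work to lie.
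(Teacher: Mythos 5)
The student's approach differs substantially from the paper's, and the very first step contains a genuine flaw that invalidates the reduction.

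The proposal begins with a compactness argument: pass to subsequences so each $\{T_n^{(i)}\}$ converges in cut metric to a graphon $W_i$, and then claim that the goal reduces to showing $\log t(C_{2k},W_i)/\log t(C_{2\ell},W_i) \neq C(C_{2k},C_{2\ell})$ for some $(k,\ell)$. This does not work. When a sequence $T_n^{(i)}$ converges in cut metric to a graphon $W_i$ with $t(C_{2\ell},W_i)=0$ (equivalently, $W_i$ is the zero graphon), the log-ratio $\log t(C_{2k},T_n^{(i)})/\log t(C_{2\ell},T_n^{(i)})$ can still converge to a finite, nontrivial limit, which is precisely what happens for the sparse constructions that are actually relevant here---for instance the projective-plane construction in the proof of Theorem~\ref{thm:even_cycles}, where every $t(C_{2m},T_n)\to 0$ but the log-ratios have nontrivial limits. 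The cut-metric limit loses exactly the asymptotic information the theorem is about, so the claimed reduction to a finite list of graphons is unjustified. The subsequent spectral reformulation (step two) is innocuous but would only apply to nonzero graphons anyway, and the step-three diagonalization, which you yourself flag as ``where the bulk of the technical work would lie,'' is not carried out and it is not clear it can be.

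The paper's proof avoids all of these issues with a short and self-contained argument. It uses the explicit formula $C(C_{2k},C_{2j}) = \frac{4k(k-1)}{4kj-2k-2j}$ from Theorem~\ref{thm:even_cycles}, observes that if the \emph{same} sequence realized both $C(C_{2k},C_{2j})$ and $C(C_{2j},C_{2i})$ then one would have $C(C_{2k},C_{2i}) = C(C_{2k},C_{2j})\cdot C(C_{2j},C_{2i})$, checks algebraically that this multiplicativity fails for every $k>j>i$, and then closes with a Ramsey-theoretic coloring argument: assigning to each pair $(i,j)$ a color naming an optimal construction among the $N$ available, a monochromatic triangle (guaranteed for large enough $m$) would force the failed multiplicativity identity, a contradiction. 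You should replace the compactness reduction with something that engages the sparse regime directly; the multiplicativity-plus-Ramsey argument is one such route.
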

\begin{proof}
Suppose that we know $C(F,G)$ and $C(G,H)$ and that the optimum is achieved
via the same sequence of graphs in both cases. Then we also know that
$C(F,H)=C(F,G) C(G,H)$. Therefore, if we have a triple of graphs for which
Lemma \ref{lem:basicproperties}(3) is  not tight, then we know that $C(F,G)$
and $C(G,H)$ require different constructions.

In Theorem~\ref{thm:even_cycles}, we will show that $C(C_{2k},C_{2j})=
\frac{4k(k-1)}{4kj-2k-2j}$ if $k\geq j$. So for $k > j > i$ we have that

\begin{align*}
C(C_{2k},C_{2i}) &= C(C_{2k},C_{2j})C(C_{2j},C_{2i})\\
\Leftrightarrow \frac{4k(k-1)}{4ki-2k-2i} &= \frac{4k(k-1)}{4kj-2k-2j}\cdot \frac{4j(j-1)}{4ji-2j-2i}\\
\Leftrightarrow  (4kj-2k-2j)\cdot(4ji-2j-2i)&= (4ki-2k-2i) \cdot 4j(j-1)\\
\Leftrightarrow 0 &= (k-j)(j-i).
\end{align*}

Therefore, we see that $C(C_{2k},C_{2i}) \neq C(C_{2k},C_{2j})C(C_{2j},C_{2i})$ for any $k>j>i$.

We claim that we need infinitely many constructions to realize
$C(C_{2j},C_{2i})$ where $j>i$ in general. Suppose not and that a finite
number of constructions $N$ suffices. Let $K_m$ have vertices $[m]$ and color
the edge $\{i,j\}\in E(K_m)$ for $i<j$ with any of the colors $c\in [N]$ for
which construction $c$ is optimal for $C(C_{2j},C_{2i})$.  For $m$ large
enough, Ramsey theory tells us that we will find a monochromatic triangle in
$K_m$, which leads to a contradiction since we know $C(C_{2k},C_{2i}) \neq
C(C_{2k},C_{2j})C(C_{2j},C_{2i})$ for any $k>j>i$.
\end{proof}

\section{Homomorphism density domination exponent for paths}\label{sec:paths}

Recall that we denote by $P_m$ the path on $m$ edges. In \cite{Stoner}, Stoner computed
$C(P_k,P_\ell)$ for all $k$ and $\ell$ except for the case when $k<\ell$ are
both odd and $k+1$ does not divide $\ell+1$. In this section, we calculate
$C(P_k,P_\ell)$ for the remaining open case.

\begin{lemma}
Let $k, l, m$ be positive integers such that $m \leq k$. Then

$$C(P_{2k-1},P_{2kl+2m-1}) = \frac{2kl+2k-1}{(l+1)(2kl+2m-1)}.$$
{\rm (}When $m=k$, this was already proved in \cite[Theorem 4.1]{Stoner}.{\rm
)}
\end{lemma}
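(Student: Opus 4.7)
The plan is to prove $C(P_{2k-1}, P_{2kl+2m-1}) = c$ with $c = \tfrac{2kl+2k-1}{(l+1)(2kl+2m-1)}$ by matching upper and lower bounds. The formula recovers Stoner's $m=k$ case (where $c=\tfrac{1}{l+1}$), and the identity $2kl+2k-1 = 2k(l+1)-1$ hints that the proof should factor through the intermediate path $P_{2k(l+1)-1}$.

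For the upper bound, I would aim to establish the pure binomial inequality
\[ t(P_{2k-1}, T)^{(l+1)(2kl+2m-1)} \geq t(P_{2kl+2m-1}, T)^{2k(l+1)-1}\]
for every graph $T$. The obvious approach via Lemma~\ref{lem:basicproperties}(3) is to chain Stoner's $m=k$ inequality $t(P_{2k-1}, T)^{l+1} \geq t(P_{2k(l+1)-1}, T)$ with some bound on $C(P_{2k(l+1)-1}, P_{2kl+2m-1})$. But this long-to-short path comparison is not Sidorenko-type: a direct computation on the star $K_{1,N}$ shows that $C(P_{2k(l+1)-1}, P_{2kl+2m-1})$ strictly exceeds the edge ratio $(2k(l+1)-1)/(2kl+2m-1)$ whenever $m<k$, so this factorization is insufficient. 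My plan is therefore to read the inequality directly off \cite{BR}'s characterization of the tropicalization of the path density profile, identifying the exponent pair $(2k(l+1)-1,\,(l+1)(2kl+2m-1))$ as a facet of the tropicalization cone for $\{P_{2k-1},P_{2kl+2m-1}\}$.

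For the lower bound, I would exhibit a sequence $\{T_N\}$ of graphs or graphons such that $\log t(P_{2k-1},T_N)/\log t(P_{2kl+2m-1},T_N) \to c$. Standard candidates --- stars $K_{1,N}$, complete bipartite graphs, $d$-regular graphs, caterpillars, and rank-one bipartite graphons --- all yield only the ratio $v(P_{2k-1})/v(P_{2kl+2m-1}) = k/(kl+m)$, which is strictly less than $c$ whenever $m<k$. The correct extremizer is presumably the sequence realizing the facet identified above; I would adapt \cite{BR}'s extremal construction for path profiles, likely a weighted graphon whose spectrum is tuned to produce the precise asymmetry between walks of lengths $2k-1$ and $2kl+2m-1$.

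The main obstacle is that neither direction is accessible by elementary means: no standard graph construction achieves a ratio beyond $k/(kl+m)$, and no naive chain of Stoner's inequality with Cauchy--Schwarz or H\"older-type walk inequalities yields the exact exponent. Both the upper bound inequality and the matching lower bound construction rely crucially on the tropicalization framework of \cite{BR} for paths, and the absence of a clean gluing decomposition of $P_{2kl+2m-1}$ into copies of $P_{2k-1}$ (when $m<k$) is what prevents a simpler self-contained derivation.
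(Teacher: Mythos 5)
Your upper-bound plan contains a concrete error that causes you to reject exactly the approach the paper uses. You write that a star $K_{1,N}$ computation ``shows that $C(P_{2k(l+1)-1}, P_{2kl+2m-1})$ strictly exceeds the edge ratio $(2k(l+1)-1)/(2kl+2m-1)$ whenever $m<k$,'' and on this basis discard the factorization through $P_{2k(l+1)-1}$. But for a path $P_a$ one has $\hom(P_a,K_{1,N})=2N^{(a+1)/2}$ (for $a$ odd), so the star realizes the ratio $\frac{a+1}{b+1}$, and for $a=2k(l+1)-1>b=2kl+2m-1$ this is \emph{strictly less} than $a/b$, not greater (since $(a+1)b-(b+1)a=b-a<0$). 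There is no obstruction here, and indeed the Erd\H{o}s--Simonovits inequality (proved in \cite{saglam,BR2}) gives $t(P_a,T)^b\geq t(P_b,T)^a$ for odd $a\geq b$, i.e.\ $C(P_a,P_b)\leq a/b$. The paper's upper bound is precisely the chaining you dismissed: it combines Stoner's $t(P_{2k-1},T)^{l+1}\geq t(P_{2k(l+1)-1},T)$ with this Erd\H{o}s--Simonovits bound to get the stated exponent directly. No facet analysis of a tropicalization cone is needed. Your alternative route via tropicalization is not in principle impossible, but you give no argument beyond asserting the target exponent is a facet, so as written this direction is a gap.

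Your lower-bound direction is pointed the right way --- simple graphons such as stars and rank-one bipartite graphons do only realize $k/(kl+m)$, and the construction must come from the weighted path blow-ups developed in \cite{BR}. The paper instantiates \cite[Definition 5.7]{BR} with explicit parameters $b=2l+1$, $s=2l$ and an indicator vector $\mathbf d$ supported on multiples of $k$, then invokes \cite[Theorem 5.9]{BR} to count homomorphisms of $P_{2k-1}$ and $P_{2kl+2m-1}$ into the resulting blown-up path. Your proposal stops at ``adapt \cite{BR}'s extremal construction'' without identifying which blow-up or how to count, so it is a sketch rather than a proof. In summary: the upper bound proposal is based on a miscalculation and avoids the correct (and simpler) argument; the lower bound proposal is directionally consistent with the paper but lacks the specific construction and the homomorphism-counting step.
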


\begin{proof}
For the upper bound, \cite[Theorem 4.1]{Stoner} shows that
$$t(P_{2k-1},T)^{l+1} \geq t(P_{2k(l+1)-1},T).$$ Moreover, the Erd\H{o}s-Simonovits inequality \cite{saglam, BR2} states that
$$t(P_{2k(l+1)-1},T)^{2kl+2m-1} \geq t(P_{2kl+2m-1},T)^{2k(l+1)-1}.$$ Together, this yields that
$$t(P_{2k-1},T)^{(l+1)(2kl+2m-1)} \geq t(P_{2kl+2m-1},T)^{2k(l+1)-1}.$$

For the lower bound, we use the path blow-up construction $T_n$ in
\cite[Definition 5.7]{BR}, which we explain here for completeness. Let
$b=2l+1$, $s=2l$, and $\mathbf{d}=(d_0, d_1, \ldots d_{kl})$ where $d_i=1$ if
$i=0 \mod k$ and $d_i=0$ otherwise. Let $T_n$ be the blow-up of $P_{2kl+1}$
with weights $p:=p_{b, s, \mathbf{d}}:V(P_{2kl+1})\cup E(P_{2kl+1}) \rightarrow \mathbb{R}$
such that
 \begin{itemize}
\item  $p(\{0\})=b$,
\item $p(\{1\})=b-s=d_0$,
\item $p(\{2u+1\})=d_0+2\sum_{v=1}^u d_v$ for all $1\leq u \leq \lfloor \frac{kl-1}{2} \rfloor$,
\item $p(\{2u\})=s-d_0-2\sum_{v=1}^{u-1} d_v$ for all $1 \leq u\leq \lfloor\frac{kl}{2}\rfloor$,
\item $p(\{2kl+1-u\})=p(u)$ for all $0\leq u \leq kl$,
\item $p(\{2u, 2u+1\})=s+d_u$ for all $0\leq u \leq \lfloor \frac{kl-1}{2}\rfloor$,
\item $p(\{2u-1, 2u\})=s$ for all $1\leq u \leq \lfloor \frac{kl-1}{2} \rfloor $,
\item $p(\{kl,kl+1\})=s+d_{\frac{kl}{2}}$ if $kl$ is even,
\item $p(\{kl,kl+1\})=p(kl)+p(kl+1)$ if $kl$ is odd,
\item $p(\{2kl-u,2kl+1-u\})=p(\{u,u+1\})$ for all $0 \leq u \leq kl-1$,
\end{itemize} i.e., a vertex $v$ of $P_{kl+1}$ becomes an independent set of
size $n^{p(\{v\})}$, and there are $n^{p(\{v,v+1\})}$ edges between the
independent sets corresponding to vertices $v$ and $v+1$. Then by
\cite[Theorem 5.9]{BR}, we have $$\hom(P_{2k-1}; T_n)= O(n^{k\cdot (2l) +
1})$$ as $n\rightarrow \infty$ and thus, since $T_n$ has $O(n^b)$ vertices,
$$t(P_{2k-1}, T_n)= O\left(\frac{n^{k\cdot (2l) + 1}}{n^{(2l+1)2k}}\right)=
O(n^{-2kl-2k+1}).$$ We also have
$$\hom(P_{2kl+2m-1},T_n)=O(n^{(kl+m)\cdot(2l)+l+1})$$ and thus
$$t(P_{2kl+2m-1},T_n)=O\left(\frac{n^{(kl+m)\cdot(2l)+l+1}}{n^{(2l+1)(2kl+2m)}}\right)=O(n^{(l+1)(-2(kl+m)+1)}).$$

\end{proof}

\begin{example}
Let $k=3, l=2, m=1$. Then $\mathbf{d}=(1,0,0,1,0,0,1)$, and $T_n$ is the blow-up of the following graph

\begin{center}
\begin{tikzpicture}
\node[circle, draw, inner sep=1pt, minimum size=0.5cm, label=below:5] (0) {0};
\node[circle, draw, inner sep=1pt, minimum size=0.5cm, label=below:1] (1) [right = 0.6cm of 0] {1};
\node[circle, draw, inner sep=1pt, minimum size=0.5cm, label=below:3] (2) [right = 0.6cm of 1] {2};
\node[circle, draw, inner sep=1pt, minimum size=0.5cm, label=below:1] (3) [right = 0.6cm of 2] {3};
\node[circle, draw, inner sep=1pt, minimum size=0.5cm, label=below:3] (4) [right = 0.6cm of 3] {4};
\node[circle, draw, inner sep=1pt, minimum size=0.5cm, label=below:1] (5) [right = 0.6cm of 4] {5};
\node[circle, draw, inner sep=1pt, minimum size=0.5cm, label=below:3] (6) [right = 0.6cm of 5] {6};
\node[circle, draw, inner sep=1pt, minimum size=0.5cm, label=below:3] (7) [right = 0.6cm of 6] {7};
\node[circle, draw, inner sep=1pt, minimum size=0.5cm, label=below:1] (8) [right = 0.6cm of 7] {8};
\node[circle, draw, inner sep=1pt, minimum size=0.5cm, label=below:3] (9) [right = 0.6cm of 8] {9};
\node[circle, draw, inner sep=1pt, minimum size=0.5cm, label=below:1] (10) [right = 0.6cm of 9] {10};
\node[circle, draw, inner sep=1pt, minimum size=0.5cm, label=below:3] (11) [right = 0.6cm of 10] {11};
\node[circle, draw, inner sep=1pt, minimum size=0.5cm, label=below:1] (12) [right = 0.6cm of 11] {12};
\node[circle, draw, inner sep=1pt, minimum size=0.5cm, label=below:5] (13) [right = 0.6cm of 12] {13};
\draw (0)--(1) node [midway, inner sep=1pt, above=0.05cm, fill=white] {5};
\draw (1)--(2) node [midway, inner sep=1pt,above=0.05cm, fill=white] {4};
\draw (2)--(3) node [midway, inner sep=1pt,above=0.05cm, fill=white] {4};
\draw (3)--(4) node [midway, inner sep=1pt,above=0.05cm, fill=white] {4};
\draw (4)--(5) node [midway, inner sep=1pt,above=0.05cm, fill=white] {4};
\draw (5)--(6) node [midway, inner sep=1pt,above=0.05cm, fill=white] {4};
\draw (6)--(7) node [midway, inner sep=1pt,above=0.05cm, fill=white] {5};
\draw (7)--(8) node [midway, inner sep=1pt,above=0.05cm, fill=white] {4};
\draw (8)--(9) node [midway, inner sep=1pt,above=0.05cm, fill=white] {4};
\draw (9)--(10) node [midway, inner sep=1pt,above=0.05cm, fill=white] {4};
\draw (10)--(11) node [midway, inner sep=1pt,above=0.05cm, fill=white] {4};
\draw (11)--(12) node [midway, inner sep=1pt,above=0.05cm, fill=white] {4};
\draw (12)--(13) node [midway, inner sep=1pt,above=0.05cm, fill=white] {5};
\end{tikzpicture}.
\end{center}

In this picture, the circled numbers from $0$ to $13$ are the labels of the vertices. The integer below vertex $i$ for $0 \leq i \leq 13$ is $p(\{i\})$, i.e., in the blow-up construction, each vertex $i$ becomes an independent set of size $n^{p(\{i\})}$.  The integer on top of each edge between $i$ and $i+1$
 denotes the value of $p(\{i, i+1\})$. In other words, there are $n^{p(\{i, i+1\})}$ edges between the independent sets corresponding to vertices $i$ and $i+1$.

There are $O(n^{13})$ homomorphisms from $P_5$ to $T_n$, and $O(n^{31})$ homomorphisms from $P_{13}$ to $T_n$. Since $T_n$ has $O(n^5)$ vertices, we get that $\log(t(P_5,T_n))\rightarrow -17$ and $\log(t(P_{13},T_n))\rightarrow -39$ as $n\rightarrow \infty$, which yields that $C(P_5,P_{13})\geq \frac{17}{39}$.
\end{example}

With Stoner's results, this yields the following theorem.

\begin{theorem}\label{thm:Cpaths}
Let $k,\ell\geq 1$. Then

$$C(P_k, P_\ell)=\left\{
\begin{array}{ll}
\frac{k}{\ell} & \textup{if } k>\ell \textup{ and either } k \textup{ is even or both } k,\ell \textup{ are odd,}\\
\frac{k+1}{\ell} & \textup{if } k \textup{ is odd and } \ell \textup{ is even,}\\
\frac{k+1}{\ell+1} & \textup{if } k<\ell \textup{ and } k \textup{ is even, and}\\
\frac{k+\ell-r}{(a+1)\ell}& \textup{if } k<\ell \textup{ and } k,\ell \textup{ are odd}
\end{array} \right.$$
where, in the last line, $a=\lfloor \frac{\ell}{k+1}\rfloor$ and $r$ is the remainder of that division.
\end{theorem}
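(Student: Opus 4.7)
The plan is to assemble Theorem~\ref{thm:Cpaths} from two inputs: Stoner's computations in \cite[Theorem 4.1]{Stoner}, which already establish the first three lines of the piecewise formula exactly as stated, and the preceding lemma, which I would use to handle the only missing regime, namely $k<\ell$ with both $k$ and $\ell$ odd. Cases 1--3 require nothing beyond citing Stoner, so the real content is to match the lemma's parametrization against the formula in the fourth line.

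To do so, I would fix odd integers $k<\ell$ and write $k=2k_{0}-1$ with $k_{0}\ge 1$. Since $k+1=2k_{0}$ is even and $\ell$ is odd, the remainder $r:=\ell \bmod (k+1)$ shares the parity of $\ell$ and is therefore odd, so $r\in\{1,3,\ldots,k\}$. I would then set $r=2m-1$ with $1\le m\le k_{0}$ and $a=\lfloor\ell/(k+1)\rfloor$, so that $\ell=2k_{0}a+2m-1$; the assumption $\ell>k$ (together with $\ell,k$ both odd, so $\ell\ge k+2$) forces $a\ge 1$. Applying the preceding lemma to the triple $(k_{0},a,m)$ yields
$$
C\bigl(P_{2k_{0}-1},P_{2k_{0}a+2m-1}\bigr)=\frac{2k_{0}a+2k_{0}-1}{(a+1)(2k_{0}a+2m-1)},
$$
and substituting back, the numerator equals $k+\ell-r=(2k_{0}-1)+(2k_{0}a+2m-1)-(2m-1)$ while the denominator equals $(a+1)\ell$, matching the claimed value $\frac{k+\ell-r}{(a+1)\ell}$.

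The main substantive obstacle sits entirely inside the preceding lemma, where the matching upper and lower bounds are obtained by combining Stoner's inequality $t(P_{2k-1},T)^{l+1}\ge t(P_{2k(l+1)-1},T)$, the Erd\H{o}s--Simonovits inequality, and the explicit weighted blow-up of $P_{2kl+1}$ drawn from \cite{BR}. At the level of the theorem itself, the only thing left to verify is the arithmetic bookkeeping above; the parity observation that $r$ is forced to be odd is what guarantees that the re-indexing $(k,\ell)\mapsto(k_{0},a,m)$ hits every odd pair with $k<\ell$ exactly once, so no case is double-counted or omitted.
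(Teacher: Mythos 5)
Your proposal is correct and takes essentially the same route as the paper: the paper likewise presents the theorem immediately after the lemma with only the remark ``With Stoner's results, this yields the following theorem,'' leaving exactly the re-indexing and parity bookkeeping you carry out. Your observation that $r$ must be odd, so $r=2m-1$ with $1\le m\le k_0$ and the substitution $k=2k_0-1$, $\ell=2k_0a+2m-1$ into the lemma's formula, is the intended way to convert the lemma's $(k,l,m)$-parametrization into the theorem's $(k,\ell,a,r)$ notation.
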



 \section{Homomorphism density domination exponent for cycles}\label{sec:cycles}
Let $C_k$ be the cycle on $k$ vertices and, for the sake of uniformity, let
$C_2=K_2$ be the graph consisting of a single edge. In \cite[Proposition
4.3]{Stoner}, Stoner computed $C(C_k, C_\ell)$ when $k<\ell$ and $k$ is even.
In this case, one could also retrieve $C(C_k, C_\ell)$ from the result of
Lov\'{a}sz on concavity of even cycles \cite[Lemma 3.9]{LL11}. We now
study the remaining cases.

By Theorem \ref{lem:existence}, $C(C_k,C_\ell)$ exists if and only if
$t(C_k,C_\ell)\neq 0$. Therefore, if $C_k$ is an odd cycle and $C_\ell$ is an
even cycle, then $C(C_k, C_\ell)$ does not exist. Moreover, if both $C_k,
C_\ell$ are odd cycles and $C_k$ is shorter, then $C(C_k, C_\ell)$ does
not exist either.

The only remaining cases are (1) $C_k$ is an even cycle, and (2) $C_k$ is an odd cycle, and $C_\ell$ is a shorter odd cycle. We solve case (1) completely, and for case (2), we prove new upper and lower bounds that determine $C(C_k,C_l)$ asymptotically.

\subsection{Characterization of $C(C_{2k}, C_\ell)$}

As mentioned above, when $2k\leq\ell$, the value $C(C_{2k},
C_\ell)=\frac{2k}{\ell}$ was already computed in \cite{Stoner}.

In the missing case $2k > \ell$, we have the following theorem.

 \begin{theorem} \label{thm:even_cycles}
     Let $k, \ell$ be positive integers with $k,\ell\geq 2$ and $2k\geq \ell$.
     Then  \[C(C_{2k}, C_{\ell}) = \frac{4k(k-1)}{2k\ell-2k-\ell}.\]
 \end{theorem}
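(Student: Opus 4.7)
\emph{Upper bound.} The plan is to combine a spectral H\"older interpolation with Sidorenko's inequality for even cycles. Let $\lambda_1,\dots,\lambda_n$ denote the eigenvalues of the adjacency matrix $A(T)$, so $\hom(C_m,T)=\sum_i \lambda_i^m$. Setting $\alpha=\frac{2k-\ell}{2k-2}$ and $\beta=\frac{\ell-2}{2k-2}$ (so that $\alpha+\beta=1$ and $2\alpha+2k\beta=\ell$), H\"older's inequality applied to the factorization $|\lambda_i|^\ell=(\lambda_i^2)^\alpha(\lambda_i^{2k})^\beta$ gives $\sum_i|\lambda_i|^\ell\leq \bigl(\sum_i\lambda_i^2\bigr)^\alpha\bigl(\sum_i\lambda_i^{2k}\bigr)^\beta$. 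Since $\sum_i\lambda_i^\ell\leq\sum_i|\lambda_i|^\ell$, normalizing by $v(T)^\ell$ and raising to the $(2k-2)$-th power yields the three-cycle interpolation
\[
t(C_\ell,T)^{2k-2}\leq t(K_2,T)^{2k-\ell}\,t(C_{2k},T)^{\ell-2}.
\]
Combining with Sidorenko's inequality $t(K_2,T)\leq t(C_{2k},T)^{1/(2k)}$ (known for $C_{2k}$) and simplifying the exponent $\frac{2k-\ell}{2k}+(\ell-2)=\frac{2k\ell-2k-\ell}{2k}$, we obtain $t(C_\ell,T)^{2k-2}\leq t(C_{2k},T)^{(2k\ell-2k-\ell)/(2k)}$, which rearranges to the desired $t(C_{2k},T)\geq t(C_\ell,T)^{4k(k-1)/(2k\ell-2k-\ell)}$.

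\emph{Lower bound.} I then need a sequence of (possibly weighted) graphs $T_n$ with $\log t(C_{2k},T_n)/\log t(C_\ell,T_n)\to c:=\frac{4k(k-1)}{2k\ell-2k-\ell}$. The upper-bound argument is asymptotically tight precisely when both (i) the H\"older step is tight (all non-zero eigenvalues of $A(T_n)$ share a common absolute value $\nu$), and (ii) Sidorenko is tight ($T_n$ is quasi-regular). Parameterizing $\nu=n^a$ and $M=n^b$ for the number of non-zero eigenvalues, a short calculation using $t(C_m,T_n)\approx M\nu^m/n^m$ shows that both tightness conditions reconcile exactly along the curve $b=\frac{2k(1-a)}{2k-1}$, along which the ratio of log-densities equals $c$. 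A natural candidate realization is a weighted cycle blow-up analogous to the path blow-up of Section~\ref{sec:paths}, or a graphon obtained from the tropical-construction machinery of~\cite{BR}; for $\ell$ odd, a slight imbalance between positive and negative eigenvalues must be introduced so that $t(C_\ell,T_n)>0$ without disturbing the leading-order asymptotics.

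The upper bound is essentially a clean application of two classical tools, so I expect the main obstacle to lie in the lower bound. The ``obvious'' candidates (sparse Erd\H{o}s--R\'enyi $G(n,p)$, complete bipartite $K_{s,t}$, or disjoint unions thereof) realize only the weaker Sidorenko ratio $2k/\ell$, since their non-zero spectrum is insufficiently concentrated in magnitude relative to their vertex count. Engineering a sequence with many non-zero eigenvalues of a common magnitude while simultaneously maintaining Sidorenko-tight quasi-regularity, so as to match the scaling relation $b=2k(1-a)/(2k-1)$, is the delicate step, and is where I expect most of the technical work to be concentrated.
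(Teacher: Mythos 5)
Your upper bound is correct and is the same argument the paper uses: the three‑term spectral interpolation $t(C_\ell,T)^{2k-2}\le t(K_2,T)^{2k-\ell}\,t(C_{2k},T)^{\ell-2}$ via H\"older on the eigenvalue sequence, followed by Sidorenko's inequality $t(K_2,T)\le t(C_{2k},T)^{1/(2k)}$.

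The gap is in the lower bound, and it is exactly where you predicted it would be. You correctly derive the scaling relation ($b=\tfrac{2k(1-a)}{2k-1}$ in your parameters, equivalently $\alpha=\tfrac{k}{2k-1}$ in the paper's) that the construction must satisfy, but you never produce a sequence of graphs that achieves it. The candidates you gesture at do not obviously work: a weighted cycle blow‑up has $O(1)$ ``structural'' eigenvalues and the remaining spectrum is diffuse, so it does not give the many‑eigenvalues‑of‑common‑magnitude profile your heuristic requires; and you say yourself that sparse $G(n,p)$ and complete‑bipartite‑type graphs only recover $\tfrac{2k}{\ell}$. The paper's construction is a finite projective plane of order $n$: pick $L=\lfloor n^{k/(2k-1)}\rfloor$ lines, declare them red, and join two points by an edge iff the line through them is red, so $T_n$ is an edge‑disjoint union of $(p+1)$‑cliques. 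Each red clique contributes $\Theta(p^\ell)$ copies of $C_\ell$ (odd or even), giving $\hom(C_\ell,T_n)\ge\Omega(n^{\alpha+\ell/2})$. For the matching upper bound on $\hom(C_{2k},T_n)$, one grades closed $2k$‑walks by the number $s$ of distinct red lines they traverse; the incidence property that any two lines meet in exactly one point pins down the $s$ ``border'' vertices, so a fixed line‑pattern contributes at most $p^{2k-s}$, and summing $L^s p^{2k-s}$ over $s$ is dominated at $s=2k$, giving $n^{2k^2/(2k-1)}$. Note also that the verification is purely combinatorial, not spectral: the paper never checks that the eigenvalue profile matches your tightness heuristic, and I would not expect it to literally hold, since a union of cliques has eigenvalues of very different magnitudes. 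Your heuristic is a useful compass, but the actual argument has to count cycles directly, and the projective‑plane incidence geometry is the idea you would still need to find.
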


\begin{proof}
For the upper bound, note that we have the following inequality
\begin{equation} \label{eq:concavity}
t(C_2,T)^{\frac{2k-\ell}{\ell -2}} t(C_{2k},T) \geq t(C_{\ell},T)^{\frac{2k-2}{\ell-2}}.
\end{equation}
One way of proving this is to notice that if $M$ is the adjacency matrix of
$T$ and $v=\of{\frac 1n\lambda_1, \ldots, \frac1n\lambda_n}$ is the vector of
its normalized eigenvalues, then $t(C_r,T)\leq ||v||_r^r$, with equality when
$r$ is even. Plugging this spectral interpretation into \eqref{eq:concavity},
we get a special case of H\"older's inequality.

Together with Sidorenko's inequality $t(C_{2k},T) \geq t(C_2,T)^{2k}$, we now
have
 \[
 t(C_{2k},T)\geq t(C_{\ell},T)^{ \frac{4k(k-1)}{2k\ell-2k-\ell}}.
 \]

 We are left to find a construction that yields the matching lower bound.

For the rest of the proof fix $k,\ell$ such that $k,\ell\geq 2$ and
$2k\geq\ell$, and consider a finite projective plane of order $n\to\infty$,
say $n=p^2+p+1$, where $p$ is a large prime number. Pick any $L = \lfloor
n^{\alpha}\rfloor$ lines and call them {\em red} where
\begin{equation} \label{eq:alpha_def}
\alpha = \frac{k}{2k-1}.
\end{equation}
Form the graph $T_n$ whose vertices are the points of the projective plane,
with two points joined by an edge if and only if the line passing through
them is red. In other words, $T_n$ is the union of (edge-disjoint) cliques
induced by red lines.

To lower bound $\hom(C_\ell, T_n)$, notice that there are $L$ red lines, and each red line yields a clique of size $p+1$ in $T_n$, so each red line contributes $\Theta(p^{\ell})$ cycles of length $\ell$. Thus
$$\operatorname{hom}(C_{\ell},T_n)\geq \Omega(L p^{\ell}) = \Omega(n^{\alpha + \ell/2}).$$
This implies that
\begin{equation} \label{eq:Cl_Tn}
t(C_{\ell},T_n) \geq \Omega(n^{\alpha + \ell/2 - \ell}) = \Omega(n^{\alpha - \ell/2}).
\end{equation}

We now upper bound $\hom(C_{2k},T_n)$. Let $\mathcal L$ be the set of all red
lines and let $v_1,\dots,v_{2k}$ be the vertices of $C_{2k}$. A homomorphism
of $C_{2k}$ into $T_n$ defines a map $\varphi\function{[2k]}{\mathcal L}$, where
$\varphi(i)$ is the red line passing through $v_i, v_{i+1}$ (the subscript
addition is modulo $2k$). There are at most $O(L^s)$ (recall that $k$ is an
absolute constant) such mappings with $|\text{im}(\varphi)|=s$. This is because there are $\binom{L}{s}\leq L^{s}$ ways to choose $\text{im}(\phi)$, and then $O(1)$ ways to construct $\phi$ with this image.

We now bound the number of $C_{2k}$ corresponding to a fixed $\varphi$. This
mapping splits $C_{2k}$ in at least $s$ arcs corresponding to different red
lines according to the value of $\varphi$. There are at least $s$ points that
are border points between different arcs, unless $s=1$. The vertices in $T_n$
corresponding to these border points are uniquely determined by $\varphi$
because every two different lines uniquely determine a point. For every
internal point (there are at most $2k-s$ of them, again unless $s=1$), we have
at most $p$ choices since each line has $p$ points. Thus, any fixed $\varphi$
with $|\text{im}(\varphi)|=s$ contributes to $\operatorname{hom}(C_{2k},T_n)$ at
most $p^{2k-s}$ when $s\geq 2$ and at most $p^{2k}$ when $s=1$.

Altogether, this gives us at most
\[
\sum_{s = 2}^{2k} O(L^sp^{2k-s}) \leq  O\left( \max_{2 \leq s \leq 2k}  n^{\frac{k s }{2k-1}} n^{\frac{2k-s}{2}} \right) \leq  O( n^{\frac{2k^2}{2k-1}})
\]
choices when $s\geq 2$ and also at most
\[ O(L p^{2k}) \leq O (n^{\frac{k}{2k-1} + k} ) \leq  O(n^{\frac{2k^2}{2k-1}})\]
choices when $s=1$. Thus \[\hom(C_{2k},T_n) \leq  O (n^{\frac{2k^2}{2k-1}})
\] which implies
\[
t(C_{2k},T_n) \leq O(n^{\frac{2k^2}{2k-1} - 2k}).
\]
By plugging in the value of $\alpha$, the desired bound holds.
\end{proof}

Observing that \eqref{eq:Cl_Tn} actually improves to $t(K_\ell,T_n)\geq
\Omega(n^{\alpha-\ell/2})$, we immediately get the following generalization.
\begin{theorem}
 Let $k, \ell$ be positive integers with $k,\ell\geq 2$ and $2k\geq \ell$, and let
 $H$ be an arbitrary graph on $\ell$ vertices with a Hamiltonian cycle. Then  \[C(C_{2k}, H) =
 \frac{4k(k-1)}{2k\ell-2k-\ell}.\]
\end{theorem}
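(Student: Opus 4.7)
The plan is to sandwich $C(C_{2k},H)$ between the value $\frac{4k(k-1)}{2k\ell-2k-\ell}$ established in Theorem~\ref{thm:even_cycles} for $H=C_\ell$, exploiting in both directions the fact that $C_\ell\subseteq H$ and $v(H)=v(C_\ell)=\ell$. For the upper bound, since $H$ contains $C_\ell$ as a spanning subgraph, every homomorphism $V(H)\to V(T)$ in particular preserves the edges of $C_\ell$; because the vertex counts agree, the normalizations are identical and we get $t(H,T)\leq t(C_\ell,T)$ for every $T$. Raising this to the power $c:=\frac{4k(k-1)}{2k\ell-2k-\ell}>0$ and chaining with the inequality $t(C_{2k},T)\geq t(C_\ell,T)^c$ from Theorem~\ref{thm:even_cycles} produces $t(C_{2k},T)\geq t(H,T)^c$, as required.

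For the lower bound, I would reuse verbatim the projective-plane construction $T_n$ from the proof of Theorem~\ref{thm:even_cycles}, and in particular keep the upper bound $t(C_{2k},T_n)\leq O(n^{2k^2/(2k-1)-2k})$ derived there. The only new ingredient is a matching lower bound on $t(H,T_n)$, and this is exactly where the paper's preceding remark applies: each of the $L=\lfloor n^{k/(2k-1)}\rfloor$ red lines spans a clique of size $p+1\geq \ell$ (for $n$ large), and any injection of $V(H)$ into such a clique is automatically a homomorphism of $K_\ell$, hence \emph{a fortiori} of $H$. This yields $\hom(H,T_n)\geq \Omega(L\cdot p^\ell)$ and therefore $t(H,T_n)\geq \Omega(n^{k/(2k-1)-\ell/2})$, which is precisely the asymptotics used for $t(C_\ell,T_n)$ in the previous proof. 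Taking the ratio of the logarithms reproduces the same value $\frac{4k(k-1)}{2k\ell-2k-\ell}$, which, combined with the matching upper bound, gives equality.

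There is no serious obstacle in this argument; the whole content of the generalization is the observation that the ``cycles of length $\ell$'' counted in Theorem~\ref{thm:even_cycles} were already living inside the cliques $K_{p+1}$ and could just as well have been counted as copies of any $\ell$-vertex graph with a Hamilton cycle. A minor point worth verifying is that there is no hidden overcounting: since any two distinct lines of the projective plane meet in a single point, two red lines cannot share an $\ell$-vertex subset once $\ell\geq 2$, so the homomorphisms contributed by different red lines through the same image are essentially disjoint and no correction factor is needed in the $\Omega(Lp^\ell)$ estimate.
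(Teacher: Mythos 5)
Your proposal is correct and matches the paper's approach: the paper's one-line justification is precisely that the projective-plane lower bound improves to $t(K_\ell,T_n)\geq\Omega(n^{\alpha-\ell/2})$ (hence holds for any $\ell$-vertex $H\subseteq K_\ell$), while the upper bound follows by chaining with $t(H,T)\leq t(C_\ell,T)$, which holds because $C_\ell$ is a spanning subgraph of $H$. You have simply spelled out the details that the paper compresses into a single sentence.
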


Next we prove that $C(H_1,H_2)$ where $H_1$ and $H_2$ are disjoint unions of
even cycles offers no surprises: all pure binomial inequalities involving
even cycles can be recovered from the density domination exponent
inequalities and concavity of cycles due to Theorem~\ref{thm:tropevencycles}
below.

\smallskip
 Recall that the {\em density profile} of a collection of connected graphs $\mathcal{U} = \{H_1, \ldots, H_s \}$, denoted by $\mathcal{D}_\mathcal{U}$,
 is the closure of the set of all vectors $(t(H_1,T), t(H_2,T), \ldots, t(H_s,T))$ as $T$ varies over all graphs (or graphons).
 In \cite{BRST}, it was shown that all valid pure binomial inequalities in $t(H_i,G)$'s are captured by
 the tropicalization of $\mathcal{D}_{\mathcal{U}}$, which as mentioned in the introduction is $ \textup{trop}(\mathcal{D}_{\mathcal{U}})  = \lim_{t \rightarrow 0}
 \log_{\frac{1}{t}}(\mathcal{D}_{\mathcal{U}})$ where $\log_b (\mathcal{D}_{\mathcal{U}})$ for some base $b$ denotes the image of $\mathcal{D}_{\mathcal{U}} \cap \RR^s_{>0}$ under the
map $\mathbf{v} \mapsto (\log_b v_1, \ldots, \log_b v_s)$.

\begin{theorem} \label{thm:tropevencycles}
Let $\mathcal{U}=\{C_2,C_4,C_6, \ldots, C_{2k}\}$, and let $\mathcal{D}_\mathcal{U}$ be the density graph profile of $\mathcal{U}$. Furthermore, let
\begin{align*}
Q_{\mathcal{U}}=\{(y_2, y_4, \ldots, y_{2k})\in \mathbb{R}^k\ |\ & y_{2i}-2y_{2i+2}+y_{2i+4} \geq 0 \ (1 \leq i \leq k-2)\\
&2ky_{2k-2} - (2k-2)y_{2k}\geq 0\\
& -(2k)y_2+y_{2k}\geq 0 \}.
\end{align*} Then
$\textup{trop}(\mathcal{D}_\mathcal{U}) = Q_{\mathcal{U}}$.
\end{theorem}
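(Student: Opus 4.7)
The plan is to establish both inclusions $\trop(\mathcal{D}_{\mathcal{U}}) \subseteq Q_{\mathcal{U}}$ and $Q_{\mathcal{U}} \subseteq \trop(\mathcal{D}_{\mathcal{U}})$.

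For the first inclusion, it suffices to check that each defining inequality of $Q_{\mathcal{U}}$ corresponds to a universal pure binomial inequality on even cycle densities. The log-convexity inequalities $y_{2i} - 2y_{2i+2} + y_{2i+4} \geq 0$ are Cauchy-Schwarz applied to the spectral identity $t(C_{2m}, T) = n^{-2m} \sum_j \lambda_j^{2m}$; the inequality $-2k y_2 + y_{2k} \geq 0$ is Sidorenko's classical $t(C_{2k}, T) \geq t(C_2, T)^{2k}$; and the inequality $2k y_{2k-2} - (2k-2) y_{2k} \geq 0$ rewrites as $\tfrac{1}{m}\log t(C_{2m}, T)$ being non-increasing in $m$, which (setting $\mu_j := \lambda_j^2 \geq 0$) reduces to the standard monotonicity of $\ell^p$-norms on nonnegative sequences, namely $\|\mu\|_{m+1} \leq \|\mu\|_m$.

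For the reverse inclusion, note that $Q_{\mathcal{U}}$ is a polyhedral cone in $\mathbb{R}^k$ cut out by exactly $k$ facet inequalities, so it has at most $k$ extreme rays, one for each choice of facet left strict. Indexing the facets by $j \in \{0, 1, \ldots, k-1\}$ (with $j=0$ the Sidorenko facet, $j \in \{1, \ldots, k-2\}$ the $j$-th log-convexity facet, and $j = k-1$ the power-mean facet), a short linear-algebra computation shows that the ray $R_j$ obtained by leaving the $j$-th facet strict has the piecewise-linear direction
\[
R_j\;:\;y_{2i} \propto \begin{cases} -\bigl((2j+1)i - (j+1)\bigr)/(2j) & \text{if } 1 \leq i \leq j+1, \\ -i & \text{if } j+1 \leq i \leq k, \end{cases}
\]
where the top branch is vacuous for $j = 0$ and the bottom for $j = k-1$; in particular, $R_0$ is the constant-graphon direction $(-1, -2, \ldots, -k)$, and $R_{k-1}$ is the direction realized in the proof of Theorem~\ref{thm:even_cycles}.

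To realize each $R_j$ with $j \geq 1$ tropically, I would reuse the finite projective plane construction of Theorem~\ref{thm:even_cycles} with the adapted threshold $\alpha = (j+1)/(2j+1)$: pick $L = \lfloor n^\alpha \rfloor$ red lines in a projective plane of order $p \sim \sqrt{n}$ and let $T_n$ be the union of the red cliques. Repeating the upper- and lower-bound analysis of that theorem gives $\hom(C_{2m}, T_n) = \Theta\bigl(n^{\max(\alpha + m,\, 2m\alpha)}\bigr)$, with the single-line term dominating for $m \leq j+1$ and the multi-line term for $m \geq j+1$, the two agreeing exactly at $m = j+1$ by the choice of $\alpha$; normalizing by $\log n$ places the resulting tropical coordinates on $R_j$. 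The boundary case $R_0$ is realized instead by a single clique of size $\sqrt{n}$ inside $n$ vertices. The main technical obstacle is this uniform cycle-counting: the proof of Theorem~\ref{thm:even_cycles} only required a one-sided bound on $\hom(C_{2k}, T_n)$ at the single value $\alpha = k/(2k-1)$, whereas here one needs matching upper and lower bounds on $\hom(C_{2m}, T_n)$ for all $1 \leq m \leq k$ and all thresholds $\alpha = (j+1)/(2j+1)$, identifying precisely where the maximum of the two competing exponents is attained as $m$ varies.
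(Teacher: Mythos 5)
Your outline is structurally the same as the paper's proof: show each defining inequality of $Q_{\mathcal{U}}$ is a valid pure binomial inequality (log-convexity, Sidorenko, power-mean; this part is correct and matches the paper), and then show each extreme ray of the simplicial cone $Q_{\mathcal{U}}$ is realizable tropically. You also correctly identify the extreme rays: your $R_j$ agree with the paper's $\mathbf{r}_j$ (for $1\leq j\leq k-1$) and $\mathbf{s}=R_0$ up to a positive scalar, and you correctly match each ray with the single facet it leaves strict.

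The gap is in the realization step, which is where all the work is. You propose reusing the finite projective plane construction from Theorem~\ref{thm:even_cycles} with $\alpha=(j+1)/(2j+1)$ and asserting $\hom(C_{2m},T_n)=\Theta(n^{\max(\alpha+m,\,2m\alpha)})$ for \emph{all} $1\leq m\leq k$. The paper's proof of Theorem~\ref{thm:even_cycles} establishes only an \emph{upper} bound on $\hom(C_{2k},T_n)$ and a \emph{lower} bound on $\hom(C_\ell,T_n)$ coming from single-line cycles; it nowhere proves a matching lower bound $\Omega(n^{2m\alpha})$ from multi-line configurations. That lower bound is false if the red lines are chosen adversarially (e.g., all through a common point, in which case there are essentially no multi-line cycles at all), so at a minimum you must insist on a random or generic choice of lines and then carry out a second-moment / general-position argument showing that $\Omega(L^{2m})$ of the $2m$-tuples of distinct red lines yield $2m$ pairwise-distinct consecutive intersection points. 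You flag this as ``the main technical obstacle'' but do not close it, and it is not a routine patch.

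The paper sidesteps this entirely by using a different, much more tractable construction: $T_{i,n}$ is a random bipartite graph with parts of size $n^{i+1}$ and edge probability $n^{-i}$. In such a graph the count $\hom(C_{2m},T_{i,n})$ splits cleanly into the tree-walk term $\Theta(n^{i+m+1})$ and the genuine-cycle term $\Theta(n^{2m})$, with the crossover exactly at $m=i+1$, and two-sided concentration is straightforward. So your approach is not wrong in spirit, but the specific construction you chose makes the uniform two-sided cycle count substantially harder than what the theorem requires, and you would need to either complete that argument (random lines, second moment over degenerate $2m$-tuples) or replace the projective-plane graphs with the paper's sparse random bipartite graphs, which get the exponent ladder for free.
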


\begin{proof}
We first show that $\textup{trop}(\mathcal{D}_\mathcal{U}) \subseteq Q_\mathcal{U}$. We know that log-convexity inequalities $t(C_{2i},T)t(C_{2i+4},T)\geq t(C_{2i+2},T)^2$ are valid for $\mathcal{D}_\mathcal{U}$ for $i\geq 1$ (cf. the proof of \eqref{eq:concavity} above). By the same token, $t(C_{2k-2},T)^{2k} \geq t(C_{2k},T)^{2k-2}$. Finally, we know $t(C_{2k},T)\geq t(C_2,T)^{2k}$ from Sidorenko's inequality. Thus inclusion holds.

We now show that $\textup{trop}(\mathcal{D}_\mathcal{U}) \supseteq Q_\mathcal{U}$. We first claim that the extreme rays of $Q_\mathcal{U}$ are
$\mathbf{r}_i=(r_{i,2},r_{i,4}, r_{i,6}, \ldots, r_{i,2k})$ for $1 \leq i \leq k-1$ where $$r_{i,2j}=\left\{\begin{array}{ll}
-i-(j-1)(2i+1) & \textup{if } 1 \leq j \leq i+1, \textup{ and}\\
-i\cdot 2j & \textup{if } i+1 \leq j\leq k,\\
\end{array}\right.$$ and $\mathbf{s}=(-1,-2,-3,\ldots, -k)$. Indeed, for $1 \leq i \leq k-2$,
$\mathbf{r}_i$ is tight with all constraints except for
$y_{2i}-2y_{2i+2}+y_{2i+4}\geq 0$, $\mathbf{r}_{k-1}$ is tight with all
constraints except for $2ky_{2k-2} - (2k-2)y_{2k}\geq 0$, and $\mathbf{s}$ is
tight with all constraints except for $-(2k)y_2+y_{2k}\geq 0$. This shows
that the $(k\times k)$ matrix $M$ of constraints in the definition of
$Q_{\mathcal U}$ is invertible, with $\mathbf r_i, \mathbf s$ being, up to
positive normalizing factors, its rows. Thus, the cone $Q_{\mathcal U}$ is
obtained from the nonnegative orthant (i.e., the conical hull of the standard unit vectors $\mathbf{e}_i$) by an invertible linear transformation and
hence $\mathbf r_i, \mathbf s$ is the full set of its extreme rays.

To show the desired inclusion, we now need to show that these extreme rays are in $\textup{trop}(\mathcal{D}_\mathcal{U})$. To realize $\mathbf{r}_i$, let $T_{i,n}$ be a bipartite graph with parts of size $n^{i+1}$ and where each edge is present with probability $n^{-i}$ (and so we expect there to be $n^{i+2}$ edges). We have that $$\hom(C_{2j},T_{i,n})=\left\{ \begin{array}{ll}
O(n^{i+2+j-1}) & \textup{if } 1\leq j\leq i+1, \textup{ and}\\
O(n^{2j}) & \textup{if } i+1\leq j \leq k,
\end{array}\right.$$ and so $$t(C_{2j},T_{i,n})=\left\{ \begin{array}{ll}
O(n^{i+2+j-1-2j(i+1)}) & \textup{if } 1\leq j\leq i+1, \textup{ and}\\
O(n^{2j-2j(i+1)}) & \textup{if } i+1\leq j \leq k,
\end{array}\right.$$ which yields the desired ray by taking the log in the limit as $n\rightarrow \infty$. Finally, to realize $\mathbf{s}$, we simply take the graph on $n$ vertices containing exactly one edge as $n\rightarrow \infty$.
\end{proof}

\begin{remark}
Theorem \ref{thm:tropevencycles} gives an alternative proof of Theorem \ref{thm:even_cycles} for {\em even} $\ell$ (the lower bound in that theorem is realized by the ray $\mathbf r_{k-1}$, uniformly for all even $\ell\leq 2k$). In the opposite direction, our current inability to determine $C(C_{2k+1}, C_{2\ell+1})$ is the stumbling block for extending Theorem \ref{thm:tropevencycles} to odd cycles.
\end{remark}

\subsection{Bounds for $C(C_{2k+1}, C_{2\ell+1})$}
For odd cycles, the story is more complicated. For $2k+1\geq 2\ell+1$, Stoner showed that $\frac{k}{\ell} \leq C(C_{2k+1}, C_{2\ell+1}) \leq \lceil \frac{k}{\ell} \rceil +1$. We improve these bounds, and prove an asymptotically sharp bound.

We first need the following lemma that has been used in the literature many times, though often implicitly. It is sometimes called ``the tensor trick''.

\begin{lemma} \label{lem:relaxation}
Assume that for all finite graphs $T_n$ we have
\begin{equation} \label{eq:widetilde}
t(H_1,T_n) \geq \wt\Omega\of{t(H_2,T_n)^{c}},
\end{equation}
where the $\wt\Omega$-notation means constant multiplicative up to (poly)-logarithmic  factors in $n$. Then $C(H_1, H_2)\leq c$.
\end{lemma}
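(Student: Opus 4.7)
The plan is to exploit multiplicativity of homomorphism densities under the categorical product, i.e., the identity $t(H, T_1 \times T_2) = t(H, T_1) \cdot t(H, T_2)$ noted early in Section~\ref{sec:prelims}. The hypothesis gives an approximate inequality that fails to be a true domination inequality only because of a polylogarithmic error factor; iterated tensoring amplifies the exponents while driving this error factor to $1$.

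First I would unpack the $\wt\Omega$-notation: the assumption says there exist constants $A>0$ and $d\geq 0$ such that for every finite graph $T$ with $v(T)=n$,
\[
t(H_1,T) \;\geq\; \frac{A}{(\log n+1)^d}\, t(H_2,T)^c.
\]
(We may assume $t(H_2,T)>0$, else the inequality is trivial.) Now fix any such target graph $T$ and any positive integer $N$, and apply this inequality to the $N$-fold categorical power $T^{\times N}:=T\times T\times\cdots\times T$, which has $n^N$ vertices. Using multiplicativity, $t(H_i,T^{\times N})=t(H_i,T)^N$ for $i=1,2$, so
\[
t(H_1,T)^N \;\geq\; \frac{A}{(N\log n+1)^d}\, t(H_2,T)^{cN}.
\]

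Taking $N$-th roots yields
\[
t(H_1,T) \;\geq\; \Bigl(\tfrac{A}{(N\log n+1)^d}\Bigr)^{1/N}\, t(H_2,T)^{c}.
\]
Because $A^{1/N}\to 1$ and $(N\log n+1)^{d/N}\to 1$ as $N\to\infty$ (with $n$ fixed), the prefactor tends to $1$. Hence $t(H_1,T)\geq t(H_2,T)^c$ for every finite graph $T$, and passing to graphons by the usual density argument gives $C(H_1,H_2)\leq c$.

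There is no real obstacle here beyond the bookkeeping: the only thing one must check is that the polylogarithmic slack $(\log n^N)^d=(N\log n)^d$ grows only polynomially in $N$ for fixed $n$, so its $N$-th root tends to $1$. This is precisely where the restriction to polylogarithmic (rather than, say, polynomial in $n$) error factors is essential; any bound worse than polylogarithmic would fail to be washed out by tensoring, since $(n^N)^{\varepsilon/N}=n^\varepsilon$ does not tend to $1$.
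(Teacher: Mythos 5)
Your proof is correct and follows the same tensor-power argument as the paper: apply the hypothesis to $T^{\times N}$, use multiplicativity $t(H_i,T^{\times N})=t(H_i,T)^N$, and observe that the polylog error in $v(T^{\times N})=n^N$ is only polynomial in $N$, so its $N$-th root tends to $1$. You spell out the bookkeeping the paper leaves implicit (why $\log(n^N)$ gives a factor $r^{O(1)}$ rather than something worse), which is accurate; the final "passing to graphons" step is unnecessary since the definition of $C(H_1,H_2)$ quantifies over finite graphs, which you have already handled.
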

\begin{proof}
Let $T$ be any fixed graph, and consider its $r$-th tensor power
$\underbrace{T\times\ldots\times T}_{r\ \text{times}}$. Applying
\eqref{eq:widetilde} to it, we find that
$$
t(H_1,T)^r \geq \frac{t(H_2,T)^{cr}}{r^{O(1)}}.
$$
Letting $r\to\infty$, we get $t(H_1,T) \geq t(H_2, T)^c$, as desired.
\end{proof}

\begin{theorem} \label{thm:odd_cycles}
For $k>\ell$, we have
$$
C(C_{2k+1}, C_{2\ell+1}) \leq \begin{cases} \frac{2(k+1)}{2\ell+1} & \text{if } k\leq 2\ell-1, \textup{ and}\\
\frac{2k(2k-2\ell+1)-1}{2\ell(2k-2\ell+1)-1} & \text{if } k\geq 2\ell. \end{cases}
$$
\end{theorem}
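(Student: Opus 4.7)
The plan is to prove each case by combining known inequalities with the tensor trick (Lemma~\ref{lem:relaxation}), which absorbs polylogarithmic slack in $n=v(T)$.

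For case 1 ($\ell<k\leq 2\ell-1$, target exponent $\tfrac{2(k+1)}{2\ell+1}$), the numerator $2(k+1)$ matches the edge count of the even cycle $C_{2(k+1)}$, suggesting we use $C_{2(k+1)}$ as an intermediate. I would combine three ingredients: (a) a spectral H\"older bound comparing $t(C_{2\ell+1},T)$ with $t(C_{2\ell+2},T)$, starting from $\sum_i|\lambda_i|^{2\ell+1}\leq n^{1/(2\ell+2)}(\sum_i\lambda_i^{2\ell+2})^{(2\ell+1)/(2\ell+2)}$ applied to the eigenvalues of the adjacency matrix of $T$; (b) the log-convexity chain for even cycles $t(C_{2\ell+2},T)^{k+1}\geq t(C_{2(k+1)},T)^{\ell+1}$ that is established in the proof of Theorem~\ref{thm:tropevencycles}; and (c) a matching spectral comparison bringing $t(C_{2(k+1)},T)$ back down to $t(C_{2k+1},T)$. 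The assumption $k\leq 2\ell-1$ (equivalently $k+1\leq 2\ell$) ensures $C_{2(k+1)}$ is sufficiently short relative to $C_{2\ell+2}$ for the exponents in the log-convexity step to line up.

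For case 2 ($k\geq 2\ell$, target exponent $\tfrac{2kq-1}{2\ell q-1}$ with $q=2(k-\ell)+1$), both $2kq-1$ and $2\ell q-1$ are the edge counts of long paths $P_{2kq-1}$ and $P_{2\ell q-1}$. I would invoke the Erd\H{o}s--Simonovits inequality $t(P_{2\ell q-1},T)^{2kq-1}\geq t(P_{2kq-1},T)^{2\ell q-1}$ (used analogously in the proof of Theorem~\ref{thm:Cpaths}), and then relate each long path density to the $q$-th power of the corresponding cycle density via a ``$q$-fold winding'' decomposition: a path of odd length $2aq-1$ is well approximated, up to polylog-controlled endpoint corrections, by a concatenation of $q$ near-closed walks of length $2a+1$, giving $t(P_{2aq-1},T)\approx t(C_{2a+1},T)^{q}$. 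The condition $k\geq 2\ell$ guarantees $q\geq 2\ell+1$, which should make the winding interpretation work cleanly on both sides.

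The main obstacle in both cases is that naive conversions between odd cycles, even cycles, and paths (via H\"older or Cauchy--Schwarz on eigenvalues, or by ``forgetting'' a closing edge) typically introduce factors polynomial in $n$, which the tensor trick cannot absorb. Overcoming this will likely require exploiting the spectral gap granted by the Perron eigenvalue on non-bipartite connected $T$ (bipartite $T$ trivially makes both sides zero), or carrying out the combinatorial winding decomposition so that only polylogarithmic loss is incurred. This control of the loss, rather than the algebraic manipulation of the exponents, is where I expect the technical heart of the argument to lie.
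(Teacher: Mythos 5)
Your plan has genuine gaps in both cases, and the core difficulty you flag at the end is exactly the part you do not resolve; moreover, some of the steps are directed the wrong way.

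In case 1, step (c) asks for an inequality of the form $t(C_{2k+1},T)\gtrsim t(C_{2(k+1)},T)^{c}$, but no such inequality can hold with any $c>0$ and any loss factor bounded by a function of $n$ alone: taking $T$ bipartite makes the left side $0$ while the right side stays positive. Indeed $t(C_{2k+1},C_{2(k+1)})=0$, so by Theorem~\ref{lem:existence} the quantity $C(C_{2k+1},C_{2(k+1)})$ does not exist. The chain (a)$\to$(b)$\to$(c) therefore cannot be assembled. Also, step (a) as written introduces a factor $n^{1/(2\ell+2)}$, a genuine power of $n$ rather than a polylog, which Lemma~\ref{lem:relaxation} cannot absorb; you acknowledge this, but the ``Perron spectral gap'' suggestion is not an argument and does not obviously kill the $n^{\Omega(1)}$ loss.

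In case 2, the ``$q$-fold winding'' heuristic $t(P_{2aq-1},T)\approx t(C_{2a+1},T)^{q}$ is not an approximation in either direction that is compatible with the tensor trick, and the arithmetic does not line up: winding $q$ times around a walk of length $2a+1$ gives length $q(2a+1)=2aq+q$, not $2aq-1$. More importantly, even granting the heuristic, the composition with Erd\H{o}s--Simonovits produces the inequality $t(C_{2\ell+1},T)^{2kq-1}\gtrsim t(C_{2k+1},T)^{2\ell q-1}$, which is an upper bound on $t(C_{2k+1},T)$; what the theorem requires is the opposite, $t(C_{2k+1},T)\geq t(C_{2\ell+1},T)^{(2kq-1)/(2\ell q-1)}$. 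You would instead need a lower bound on $t(P_{2kq-1},T)$ in terms of $t(C_{2k+1},T)^{q}$, that is, the ``hard'' direction in which a path density is bounded below by an odd-cycle density, and neither a Sidorenko-type argument nor a subgraph containment gives this.

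For comparison, the paper's proof sidesteps all spectral/path machinery. It first prunes $T$ so that every remaining edge lies in roughly the average number of $C_{2\ell+1}$-homomorphisms; Lemma~\ref{lem:relaxation} is used only to absorb the polylogarithmic loss from this pruning. It then adds a diagonal to $C_{2k+1}$, decomposing it into an odd arc of $2\ell+1$ edges and an even arc of $2k-2\ell+2$ edges, which yields $t(C_{2k+1},T)\geq\wt\Omega\of{\tfrac{t(C_{2\ell+1},T)\,t(C_{2k-2\ell+2},T)}{t(K_2,T)}}$. Eliminating $t(K_2,T)$ via Sidorenko and converting $t(C_{2k-2\ell+2},T)$ back to $t(C_{2\ell+1},T)$ via Theorem~\ref{thm:even_cycles} (when $k\geq2\ell$) or Stoner's result (when $k\leq2\ell-1$) gives the two stated exponents. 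The key feature of that decomposition is that it goes from two pieces with positive $C$ (an odd cycle and an even cycle, each compared against $C_{2\ell+1}$) up to $C_{2k+1}$, so everything is directed correctly; both of your proposed routes fail precisely at a step where the comparison runs the wrong way.
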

\begin{proof}
Let us fix a graph $T$ with $n$ vertices and $m$ edges; we want to lower bound $t(C_{2k+1},T)$ in terms of $t(C_{2\ell+1},T)$. Construct recursively a sequence of target graphs $T=T_m,T_{m-1},\ldots, T_{m'} = T'$, where each is a spanning subgraph on $n$ vertices, where $T_{i-1}$ is obtained from $T_i$ as follows. If there is an edge that is contained in at most $\frac{\hom(C_{2\ell+1}, T_i)}{i\log n}$ homomorphism copies of $C_{2\ell+1}$, remove this edge. If there are no such edges, let $m':= i$ and stop the procedure.

Notice that $\sum_{i=1}^m\frac 1{i\log n}= O(1)$, which implies $\prod_{i=1}^m \of{1-\frac 1{i\log n}}=\Omega(1)$. In other words, a constant fraction of  homomorphism copies of $C_{2\ell+1}$ are still preserved in $T'$. Therefore, by Lemma \ref{lem:relaxation}, we can assume without loss of generality that $T'=T$. (Note that this idea is also often used in the literature and presented as pruning $T$ to a rainbow version by choosing a random balanced partition. However, because of the symmetry of $C_{2\ell+1}$, we prefer to give a simple ad hoc argument.)

Let $C_a^E$ be the labeled cycle on $a$ vertices in which two adjacent
vertices are labeled as $1$ and $2$. Given a graph $T$ with a labeled edge
$(u,v)$, we denote by $\hom_{u,v}(C_a^E, T)$ the number of homomorphisms from
$C_a$ to $T$ such that $(1,2)$ is mapped to the labeled edge $(u,v)$ in $T$.
In this language, what we have achieved so far is that
$$
\min_{\{u,v\}\in E(T)} \hom_{u,v}(C_{2\ell+1}^E, T) \geq \wt\Omega\of{\frac{\hom(C_{2\ell+1}, T)}{e(T)}}.
$$

Let $C_{2k+1}^+$ be the cycle $C_{2k+1}$ with an additional diagonal edge connecting two vertices such that this edge separates the cycle into two parts: one arc of $2\ell+1$ edges, and one arc of $2k-2\ell + 2$ edges.
We have
\begin{gather}
\begin{split}
 \label{eq:main}
  \hom(C_{2k+1}, T) \geq  & \\
 \hom(C_{2k+1}^+,T) = & \sum_{\{u,v\} \in E(T)}
\hom_{u,v}(C_{2\ell+1}^E,T)\hom_{u,v}(C_{2k - 2\ell+2}^E,T)
\\
\geq & \min_{\{u,v\}\in E(T)} \hom_{u,v}(C_{2\ell+1}^E, T)  \sum_{\{u,v\} \in E(T)}
\hom_{u,v}(C_{2k - 2\ell+2}^E,T)
\\
\geq & \wt\Omega\of{\frac{\hom(C_{2\ell+1}, T)}{e(T)}} \hom(C_{2k - 2\ell+2},T)
\\
= & \wt\Omega\of{\frac{\hom(C_{2\ell+1}, T) \hom(C_{2k-2\ell+2}, T)}{e(T)}}.
\end{split}
\end{gather}

Therefore we have (since \eqref{eq:main} is vertex-balanced)
\begin{equation} \label{eq:pruning}
t(C_{2k+1},T) \geq \wt\Omega\of{\frac{t(C_{2\ell+1}, T) t(C_{2k-2\ell+2}, T)}{t(K_2, T)}}.
\end{equation}
By Sidorenko's inequality, $t(K_2,T) \leq t(C_{2k-2\ell+2}, T)^{1/(2k-2\ell+2)}$.
Combining these two inequalities, we have
\[t(C_{2k+1},T) \geq \tilde \Omega \left(t(C_{2\ell+1},T)t(C_{2k-2\ell+2},T)^{1 - 1/(2k-2\ell+2)}\right).
\]
By  Theorem~\ref{thm:even_cycles} (applied with $k\mapsto k-\ell+1,\
\ell\mapsto 2\ell+1$), when $k\geq 2\ell$ we have
\[
t(C_{2k+1},T) \geq \tilde \Omega \of{t(C_{2\ell+1},T)t(C_{2\ell+1},T)^{(1 - 1/(2k-2\ell+2)) \cdot \frac{4(k-\ell+1)(k-\ell)}{4\ell(k-\ell+2)- (2\ell+1)}}}.
\]
Therefore,  when $k\geq 2\ell$, \[C(C_{2k+1}, C_{2\ell+1}) \leq (1 -
\frac{1}{(2k-2\ell+2)})\cdot \frac{4(k-\ell+1)(k-\ell)}{4\ell(k-\ell+1) -
(2\ell+1)} + 1.\] For the other case when $k\leq 2\ell-1$, we use
\cite[Proposition 4.3]{Stoner}:
\begin{eqnarray*}
t(C_{2k+1},G) &\geq & \tilde \Omega \of{t(C_{2\ell+1},T)t(C_{2k-2\ell+2},T)^{1 - 1/(2k-2\ell+2)}} \\
&\geq & \tilde\Omega\of{t(C_{2\ell+1},T)^{1 + (1 - \frac{1}{2k-2\ell+2}) \cdot \frac{2k-2\ell+2}{2\ell+1} }}.
\end{eqnarray*}
\end{proof}

The asymptotically matching lower bound is below.
\begin{theorem} \label{thm:odd_lower}
$$
C(C_{2k+1}, C_{2\ell+1}) \geq \frac{4k^2-1}{4k\ell-1}.
$$
\end{theorem}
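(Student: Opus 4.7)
\medskip

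\textbf{Proof plan.} I will exhibit a sequence of target graphs $\{T_n\}$ realizing
$$\frac{\log t(C_{2k+1}, T_n)}{\log t(C_{2\ell+1}, T_n)} \;\longrightarrow\; \frac{4k^2-1}{4k\ell-1}.$$
The factorization $4k^2-1 = (2k-1)(2k+1)$ points to a pseudorandom construction at a specific length-scale, namely a $d$-regular graph on $n$ vertices with $d = n^{2/(2k+1)}$. At this degree the leading-eigenvalue contribution $d^{2k+1} = n^2$ to $\hom(C_{2k+1}, T_n) = \operatorname{tr}(A^{2k+1})$ balances the Ramanujan-type bulk contribution $n\cdot(2\sqrt{d-1})^{2k+1} = \Theta(n^2)$, under the spectral assumption $|\lambda_i|\leq 2\sqrt{d-1}$ for $i \geq 2$. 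This predicts $t(C_{2k+1}, T_n) = \Theta(n^{-(2k-1)})$. For the shorter cycle $C_{2\ell+1}$ the bulk contribution $n \cdot d^{(2\ell+1)/2} = n^{1+(2\ell+1)/(2k+1)}$ dominates the main term $d^{2\ell+1} = n^{2(2\ell+1)/(2k+1)}$ (since $k>\ell$), which would give $t(C_{2\ell+1}, T_n) = \Theta(n^{-(4k\ell-1)/(2k+1)})$. Dividing the two log-exponents yields precisely $(2k-1)(2k+1)/(4k\ell-1) = (4k^2-1)/(4k\ell-1)$.

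The concrete plan is to (i) select an explicit construction $T_n$ with the required degree scaling and spectral control (candidates include an algebraic Ramanujan graph such as an LPS-type Cayley graph, or an explicit incidence-type structure from finite geometry); (ii) compute $\hom(C_r, T_n) = \sum_i \lambda_i^r$ via the spectral decomposition for $r = 2k+1$ and $r = 2\ell+1$, carefully isolating the principal eigenvalue from the bulk; and (iii) apply Lemma~\ref{lem:relaxation} (the tensor trick) to absorb any polylogarithmic slack in the bounds.

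The main obstacle is step (ii) for the shorter cycle: one must actually realize the bulk spectral contribution as a lower bound on $\hom(C_{2\ell+1}, T_n)$, not merely an upper bound. For a generic random $d$-regular graph the non-trivial spectrum is asymptotically symmetric (Kesten--McKay), so all odd eigenvalue-power moments vanish in expectation and the actual cycle count collapses to the main term $d^{2\ell+1}$, yielding only the trivial ratio $(2k+1)/(2\ell+1)$. Consequently the construction cannot be a uniformly random regular graph; instead, the proof must exploit either an explicit algebraic construction whose odd spectral moments can be controlled directly, or a combinatorial boost (for instance, planting sufficiently many short odd subcycles) that increases $\hom(C_{2\ell+1}, T_n)$ to the predicted order without simultaneously inflating $\hom(C_{2k+1}, T_n)$.
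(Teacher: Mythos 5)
Your numerology is correct: with $d = n^{2/(2k+1)}$ the principal term $d^{2k+1}$ and a bulk contribution of size $n\,d^{(2k+1)/2}$ would both be $\Theta(n^2)$, and the bulk term for $C_{2\ell+1}$ would give exactly the exponent $(4k\ell-1)/(2k+1)$, so the ratio $(4k^2-1)/(4k\ell-1)$ comes out. But as you yourself acknowledge, this is only a \emph{plan}: for any construction whose nontrivial spectrum is asymptotically symmetric (a random $d$-regular graph, or indeed most Ramanujan-type families), the odd moments $\sum_{i\geq 2}\lambda_i^{2\ell+1}$ cancel rather than add, and $\hom(C_{2\ell+1},T_n)$ collapses to the principal term $d^{2\ell+1}$. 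You flag this obstacle honestly, propose either an ``algebraic'' family with controllable odd moments or ``planting'' short odd cycles, and stop there. That is a genuine gap --- the hard part of the lower bound is precisely to produce a graph that is sparse and pseudorandom enough to keep $\hom(C_{2k+1},\cdot)$ small while containing many short odd cycles, and neither of your two suggested remedies is carried out or shown to be compatible with the desired upper bound on $\hom(C_{2k+1},\cdot)$.

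The paper's proof is different in kind and essentially one line: it reuses the construction from Theorem~\ref{thm:even_cycles}. There $T_n$ is built from a projective plane of order $p$ (with $n=p^2+p+1$) by selecting $L=\lfloor n^\alpha\rfloor$ ``red'' lines with $\alpha = k/(2k-1)$ and taking the union of the edge-disjoint cliques on those lines. The lower bound $\hom(C_{\ell},T_n) = \Omega(L p^{\ell})$ is purely combinatorial (count cycles inside a single red clique), so it applies unchanged to odd $\ell$; and the upper bound on $\hom(C_{2k},T_n)$ was proved by counting homomorphisms according to how the cycle's edges split among red lines, which never used parity either. Hence the entire argument of Theorem~\ref{thm:even_cycles} goes through with $k\mapsto k+1/2$ and $\ell\mapsto 2\ell+1$, giving $\alpha = (2k+1)/(4k)$ and the stated bound $\frac{4k^2-1}{4k\ell-1}$. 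In effect, the paper's construction realizes your ``plant many short odd cycles'' idea by taking cliques rather than pseudorandom edges, and it controls the long odd cycle count by a finite-geometry incidence argument rather than a spectral gap. If you want to salvage your spectral framing, the key observation you are missing is that the right ``algebraic construction'' is not a Ramanujan graph at all but a union of edge-disjoint cliques coming from a projective plane, whose cycle counts are estimated combinatorially.
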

\begin{proof}
The construction in the proof of Theorem \ref{thm:even_cycles} did not use
the fact that $2k$ is even: the only way it depends on $k$ is via the
definition \eqref{eq:alpha_def}. The upper bound on $\hom(C_{2k}, T_n)$ also
never used that $C_{2k}$ is even. Plugging $k\mapsto k+1/2$ in Theorem 4.1
gives the desired bound.
\end{proof}

\begin{corollary}
$$C(C_5,C_3)\in \left[ \frac{15}{7}, \frac{11}{5}\right].$$
Moreover, $C(C_5^+,C_3) = \frac{11}{5}$, where $C_5^+$ is $C_5$ with one added diagonal.
\end{corollary}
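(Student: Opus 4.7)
The interval for $C(C_5, C_3)$ is immediate: specialising Theorems~\ref{thm:odd_cycles} and \ref{thm:odd_lower} to $k=2$, $\ell=1$ (the case $k\geq 2\ell$ of the former) yields the upper bound $\frac{2k(2k-2\ell+1)-1}{2\ell(2k-2\ell+1)-1}=\frac{11}{5}$ and the lower bound $\frac{4k^2-1}{4k\ell-1}=\frac{15}{7}$.

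For the upper bound $C(C_5^+, C_3)\leq \frac{11}{5}$, I would re-run the proof of Theorem~\ref{thm:odd_cycles} with $k=2, \ell=1$. The only place that proof used $C_5^+$ as a proxy for $C_5$ was the opening inequality $\hom(C_5, T)\geq \hom(C_5^+, T)$; if we instead aim to dominate $t(C_5^+, T)$ directly, this step becomes a tautological equality. Every subsequent manipulation — the edge-pruning reduction, the decomposition $\hom(C_5^+, T) = \sum_{\{u,v\}\in E(T)} \hom_{u,v}(C_3^E, T)\hom_{u,v}(C_4^E, T)$, Sidorenko's $t(K_2,T)\leq t(C_4,T)^{1/4}$, and the even-cycle domination $t(C_4,T)\geq t(C_3,T)^{8/5}$ from Theorem~\ref{thm:even_cycles} — operates only on $\hom(C_5^+, T)$, so the chain $t(C_5^+,T)\geq \wt\Omega\of{t(C_3,T)\,t(C_4,T)^{3/4}}\geq \wt\Omega\of{t(C_3,T)^{11/5}}$ carries over verbatim. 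Lemma~\ref{lem:relaxation} then promotes this to $C(C_5^+, C_3)\leq \frac{11}{5}$.

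For the matching lower bound, I plan to reuse the projective-plane construction from the proof of Theorem~\ref{thm:even_cycles} but with a retuned exponent $\alpha=\frac{2}{3}$ (rather than $\alpha=\frac{5}{8}$, which was optimal for $C_5$): take a finite projective plane of order $p\approx\sqrt{n}$, declare $L:=\lfloor n^{2/3}\rfloor$ lines red, and let $T_n$ be the union of the corresponding cliques. The same triangle count as in Theorem~\ref{thm:even_cycles} gives $\hom(C_3, T_n) = \Theta(Lp^3) = \Theta(n^{13/6})$, hence $t(C_3, T_n) = \Theta(n^{-5/6})$. For $C_5^+$ I would use the triangle-$C_4$ gluing identity
\[
\hom(C_5^+, T_n) = \sum_{(u,v)\in E(T_n)} \hom_{u,v}(C_3^E, T_n)\cdot \hom_{u,v}(C_4^E, T_n),
\]
and argue that for a generic red edge $(u,v)$ on a red line $\mathcal L$ the dominant contribution to each factor comes from keeping the remaining $C_5^+$-vertices inside $\mathcal L$'s clique, namely $\hom_{u,v}(C_3^E, T_n)=\Theta(p)$ and $\hom_{u,v}(C_4^E, T_n)=\Theta(p^2)$. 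Combined with $e(T_n)=\Theta(Lp^2)=\Theta(n^{5/3})$, this produces $\hom(C_5^+, T_n)=\Theta(n^{19/6})$, so $t(C_5^+, T_n)=\Theta(n^{-11/6})$ and $\log t(C_5^+,T_n)/\log t(C_3,T_n)\to \frac{11}{5}$.

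The main delicacy is the domination check: $\alpha=\frac{2}{3}$ is precisely the threshold at which the cross-clique contribution to $\hom_{u,v}(C_4^E, T_n)$, of order $\Theta(L^3/n)=\Theta(n)$, exactly matches the in-clique $\Theta(p^2)=\Theta(n)$, while cross-line triangles contribute $\Theta(L^3)=\Theta(n^2)$ — smaller than the in-clique $\Theta(Lp^3)=\Theta(n^{13/6})$ by only an $n^{1/6}$ factor. Using the incidence estimate that a typical point lies on $\Theta(L/\sqrt{n})=\Theta(n^{1/6})$ red lines, I expect a careful bookkeeping to confirm that no off-diagonal term exceeds the claimed order, so the asymptotics go through and yield $C(C_5^+, C_3)\geq \frac{11}{5}$, completing the proof.
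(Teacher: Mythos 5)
Your specialization of Theorems~\ref{thm:odd_cycles} and~\ref{thm:odd_lower} to $(k,\ell)=(2,1)$ is correct and gives the interval $\left[\frac{15}{7},\frac{11}{5}\right]$, and your derivation of $C(C_5^+,C_3)\le \frac{11}{5}$ is exactly the paper's: re-running the proof of Theorem~\ref{thm:odd_cycles}, the only place $C_5$ is used is the opening relaxation $\hom(C_5,T)\ge\hom(C_5^+,T)$, which becomes an equality when the graph is $C_5^+$ itself, so \eqref{eq:main} onward goes through verbatim and Lemma~\ref{lem:relaxation} converts the final $\wt\Omega$-inequality into a domination exponent.

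For the matching lower bound you have the right construction ($\alpha=\tfrac23$, clique-union of red lines, $t(C_3,T_n)=\Theta(n^{-5/6})$, $t(C_5^+,T_n)=\Theta(n^{-11/6})$), but you leave a gap by writing ``declare $L$ lines red'' with no further constraint and then reasoning about ``generic'' red edges and ``typical'' points. For an arbitrary choice of $L=\lfloor n^{2/3}\rfloor$ red lines, the per-edge counts $\hom_{u,v}(C_3^E,T_n)$ and $\hom_{u,v}(C_4^E,T_n)$ can vary by polynomial factors across red edges (e.g.\ when many red lines are forced through one point), and it is precisely the possible correlation of these outliers in the product $\sum_{\{u,v\}}\hom_{u,v}(C_3^E)\hom_{u,v}(C_4^E)$ that your ``careful bookkeeping'' would need to rule out; you do not do this. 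The paper sidesteps it by choosing the red lines \emph{uniformly at random}: concentration then makes $\hom_{u,v}(C_3^E,T_n)$ and $\hom_{u,v}(C_4^E,T_n)$ essentially independent of $\{u,v\}$ up to polylogarithmic factors, so \eqref{eq:main} is \emph{tight} and one can \emph{upper} bound $t(C_5^+,T_n)$ by $\wt\Omega\!\of{t(C_3,T_n)t(C_4,T_n)/t(K_2,T_n)}$ and then substitute the already-established asymptotics for $t(C_3,T_n)$ and $t(C_4,T_n)$. Once you insert the random choice of lines, your direct per-edge bookkeeping and the paper's ``\eqref{eq:main} is tight'' phrasing become two equivalent ways of organizing the same estimate, and your computed order $\hom(C_5^+,T_n)=\Theta(n^{19/6})$ falls out.
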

\begin{proof}
The statement for $C_5$ follows directly from Theorems \ref{thm:odd_cycles} and
\ref{thm:odd_lower}.

As for $C_5^+$, the only property of $C_{2k+1}$ we use in the proof of
Theorem \ref{thm:odd_cycles} is \eqref{eq:main}, and it holds for $C_5^+$ as
well. For the lower bound we again apply the construction from the proof of
Theorem \ref{thm:even_cycles} with $\alpha=2/3$ and {\em randomly} chosen set
of lines $L$. Due to this random choice, $\hom_{u,v}(C_3^E, T_n)$ and
$\hom_{u,v}(C_4^E, T_n)$ do not depend on the edge $(e,v)$, up to polylog
factors. Hence \eqref{eq:main} is tight, and we use it to {\em upper} bound
$t(C_5^+, T_n)$ in terms of $t(C_3, T_n), t(C_4,T_n)$.
\end{proof}


\section{Other computations and miscellaneous results}\label{sec:misc}

Let $\nu^\ast(H)$ be the fractional matching number of $H$ and recall that $P_m$ denotes the path with $m$ edges.

\begin{theorem} \label{thm:misc}
We have the following explicit values for $C(H_1,H_2)$.
\begin{enumerate}
\item  Suppose that $v(H_1)\leq v(H_2)$ and assume that every set
of $v(H_1)$ vertices in $H_2$ contains at least one copy of $H_1$ as a subgraph. Then
$C(H_1,H_2)=\frac{v(H_1)}{v(H_2)}$.
    \item We have that $C(P_2,H)= \frac{3}{v(H)}$ whenever $H$ can be
        vertex-covered by vertex-disjoint paths of length at least two. This means there exists a set of vertex disjoint paths $\{P_{\ell_1}, \cdots P_{\ell_k}\}$ each of length at least two such that $V(H)=\bigsqcup_{i=1}^k V(P_{\ell_i})$ and $E(H)\supseteq \bigsqcup_{i=1}^k E(P_{\ell_i})$.
    \item We have that $C(K_2, H) = \nu^\ast(H)^{-1}$. \label{thm:rho}

    \item If $H_1\subseteq H_2$ (not necessarily
        spanning) and $\nu^\ast(H_1)=\nu^\ast(H_2)$ then $C(H_1,H_2)$=1.
\end{enumerate}

\end{theorem}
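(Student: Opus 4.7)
The lower bound $C(H_1,H_2)\geq v(H_1)/v(H_2)$ is the third estimate in Lemma~\ref{lem:simple} (realized by $K_n$ together with isolated vertices). For the matching upper bound I apply \emph{Shearer's entropy (projection) inequality} to $\mathcal{F}=\{\phi\colon V(H_2)\to V(T)\mid \phi \text{ is a homomorphism}\}$, using the covering family of all $m:=\binom{v(H_2)}{v(H_1)}$ subsets $S\subseteq V(H_2)$ of size $v(H_1)$. Each vertex of $V(H_2)$ lies in exactly $k:=\binom{v(H_2)-1}{v(H_1)-1}$ of these subsets, and the hypothesis forces each restriction $\phi|_S$ to be a homomorphism of the copy of $H_1$ sitting inside $H_2[S]$, so $|\mathcal{F}_S|\leq \hom(H_1,T)$. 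Shearer then yields $\hom(H_2,T)^k\leq \hom(H_1,T)^m$, and the identity $m/k=v(H_2)/v(H_1)$ together with dividing by $v(T)^{v(H_2)}$ produces $t(H_2,T)\leq t(H_1,T)^{v(H_2)/v(H_1)}$. The main conceptual point I want to highlight is that the tempting ``spanning subgraph'' approach---trying to embed $v(H_2)$ vertex-disjoint copies of $H_1$ inside $v(H_1)$ copies of $H_2$---actually fails already for $H_1=K_2$, $H_2=K_3$ (since $2K_3$ has no perfect matching), and entropy is the right replacement.

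\textbf{Part (2).} The lower bound $3/v(H)$ is again Lemma~\ref{lem:simple}. For the upper bound, the hypothesis exhibits $\bigsqcup_i P_{\ell_i}$ as a spanning subgraph of $H$, so $t(H,T)\leq \prod_i t(P_{\ell_i},T)$. I then invoke Theorem~\ref{thm:Cpaths} in the case $k=2$ (which is even and at most $\ell_i$) to obtain $t(P_{\ell_i},T)\leq t(P_2,T)^{(\ell_i+1)/3}$ for every $i$, and multiplying while using $\sum_i(\ell_i+1)=v(H)$ delivers $t(H,T)\leq t(P_2,T)^{v(H)/3}$.

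\textbf{Part (3).} For the upper bound I apply \emph{Finner's generalized H\"older inequality}, treating $V(T)$ as a probability space with the uniform measure. Let $\{x_e\}$ be an optimal fractional matching of $H$ with $\sum_e x_e=\nu^*(H)$ and set $p_e:=1/x_e$, so the covering condition $\sum_{e\ni v}1/p_e\leq 1$ of Finner is satisfied; taking each $f_e$ to be the adjacency indicator of $T$ (viewed as a function $V(T)^2\to\{0,1\}$), one has $t(H,T)=\mathbb{E}_\phi[\prod_e f_e(\phi|_{V(e)})]$ and $\|f_e\|_{L^{p_e}}=t(K_2,T)^{1/p_e}$, so Finner delivers
\[
t(H,T)\leq \prod_e \|f_e\|_{L^{p_e}} = \prod_e t(K_2,T)^{1/p_e} = t(K_2,T)^{\nu^*(H)}.
\]
For the matching lower bound I use an LP-dual-motivated construction whose precise form depends on the half-integrality of an optimal fractional vertex cover of $H$: for ``clique-like'' cases such as $H=K_3$ this is $K_m$ together with isolated vertices (giving Kruskal--Katona), while for graphs such as $P_4$ whose optimal cover is integer-valued one uses an unbalanced multipartite blow-up; in each case a direct asymptotic computation gives $\log t(K_2,T_n)/\log t(H,T_n)\to 1/\nu^*(H)$.

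\textbf{Part (4).} This is a short formal deduction. Since $H_1\subseteq H_2$, Lemma~\ref{lem:basicproperties}(4) gives $C(H_1,H_2)\leq 1$. Applying Lemma~\ref{lem:basicproperties}(3) with $F=K_2$, $G=H_1$, $H=H_2$ and invoking part (3) together with the hypothesis $\nu^*(H_1)=\nu^*(H_2)$ yields
\[
\frac{1}{\nu^*(H_2)}=C(K_2,H_2)\leq C(K_2,H_1)\,C(H_1,H_2)=\frac{1}{\nu^*(H_2)}\,C(H_1,H_2),
\]
forcing $C(H_1,H_2)\geq 1$.
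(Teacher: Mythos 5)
Your proposal is correct throughout, but parts (1) and (3) take a genuinely different route from the paper, which simply attributes part (1) to the Kruskal--Katona theorem and part (3) to the reference \cite{fracmatching}, giving no details. For part (1), you apply Shearer's entropy lemma directly to the set of homomorphisms $H_2\to T$ projected onto all $v(H_1)$-subsets of $V(H_2)$; this gives the clean inequality $\hom(H_2,T)\le\hom(H_1,T)^{v(H_2)/v(H_1)}$ with no constants to track, whereas a Kruskal--Katona argument (counting $v(H_2)$-subsets of $V(T)$ hosting a copy of $H_2$, whose $v(H_1)$-shadow consists of sets hosting $H_1$) carries harmless but annoying automorphism factors. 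Both work; yours is somewhat more self-contained, and your aside about why the ``spanning subgraph'' shortcut fails already for $(K_2,K_3)$ is accurate. For part (3), your upper bound via Finner's generalized H\"older inequality (with exponents $p_e=1/x_e$ from an optimal fractional matching, so that the covering condition $\sum_{e\ni v}1/p_e\le 1$ is exactly the fractional-matching constraint) is precisely the standard argument in the cited paper and is fully correct; your lower-bound sketch correctly identifies the key input (half-integrality of an optimal fractional vertex cover and a corresponding blow-up construction) but is left at the level of examples rather than a uniform construction, which in a full write-up you would need to make precise. Parts (2) and (4) are word-for-word the paper's argument.
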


\begin{proof}
\begin{enumerate}
\item This follows from the Kruskal-Katona theorem.

\item  From our assumption, we know that there exists a set of paths
    $\{P_{\ell_1}, \cdots P_{\ell_k}\}$, $\ell_i\geq 2$, such that
    $V(H)=\bigsqcup_{i=1}^k V(P_{\ell_i})$ and $E(H)\supseteq
    \bigsqcup_{i=1}^k E(P_{\ell_i})$. Then we have that

\begin{align*}
t(H,T) &\leq t(P_{\ell_1}\cup \cdots \cup P_{\ell_k},T)\\
&= \prod_{i=1}^k t(P_{\ell_i},T)\\
&\leq \prod_{i=1}^k t(P_2,T)^{\frac{\ell_i+1}{3}}\\
&=t(P_2,T)^{\frac{v(H)}{3}}
\end{align*}
where the penultimate line follows from the fact that we know that  $C(P_2,P_\ell)=\frac{3}{\ell+1}$ from Theorem~\ref{thm:Cpaths}. This yields that $C(P_2,T)\leq \frac{3}{v(H)}$.

The lower bound follows from the last part in Lemma \ref{lem:simple}.

\item This was already known from \cite{fracmatching}.

\item The upper bound follows from Lemma \ref{lem:basicproperties}(4), and
    the lower bound follows from Lemma \ref{lem:basicproperties}(3), with
    $F\mapsto K_2,\ G\mapsto H_1,\ H\mapsto H_2$, and Theorem
    \ref{thm:misc}(3).
\end{enumerate}

\end{proof}

\begin{corollary}
We have that $C(P_2,H)$ can be easily computed for every 4-vertex graph $H$.
\end{corollary}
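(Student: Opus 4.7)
The plan is to enumerate the eleven non-isomorphic graphs on four vertices and verify that the value (or non-existence) of $C(P_2,H)$ is determined directly by one of the four parts of Theorem~\ref{thm:misc}, possibly after a trivial reduction that discards isolated vertices. First, Theorem~\ref{lem:existence} gives that $C(P_2,H)$ exists if and only if $t(P_2,H)>0$, which holds if and only if $H$ has a vertex of degree at least $2$; this immediately eliminates the three 4-vertex graphs of maximum degree $\leq 1$ (the edgeless graph, $K_2$ together with two isolated vertices, and the matching $2K_2$).

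For the five graphs that contain a Hamiltonian path --- namely $P_3$, $C_4$, the paw, $K_4-e$, and $K_4$ --- the spanning $P_3$ by itself satisfies the covering hypothesis of Theorem~\ref{thm:misc}(2), which delivers $C(P_2,H)=3/4$. The star $K_{1,3}$ has no Hamiltonian path, so clause (2) does not apply, but $P_2\subseteq K_{1,3}$ and a direct calculation gives $\nu^\ast(P_2)=\nu^\ast(K_{1,3})=1$, so Theorem~\ref{thm:misc}(4) yields $C(P_2,K_{1,3})=1$.

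The only remaining cases are $P_2$ together with an isolated vertex and $K_3$ together with an isolated vertex. Here the key observation is that appending an isolated vertex does not change homomorphism densities: if $H=H'\cup\{v\}$ with $v$ isolated, then $\operatorname{hom}(H,T)=\operatorname{hom}(H',T)\cdot v(T)$ while $v(H)=v(H')+1$, so $t(H,T)=t(H',T)$ for every $T$ and hence $C(P_2,H)=C(P_2,H')$. For $H'=P_2$ this gives the trivial value $1$, and for $H'=K_3$ Theorem~\ref{thm:misc}(1) (every $3$-element subset of $V(K_3)$ trivially contains a copy of $P_2$) gives $C(P_2,K_3)=1$.

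The entire argument is pure case analysis; no case presents a genuine obstacle, and the only nontrivial input is recognizing the correct clause of Theorem~\ref{thm:misc} (together with the isolated-vertex reduction) to invoke for each $H$. The final tally: $C(P_2,H)$ does not exist for the three graphs of maximum degree at most $1$, equals $3/4$ for the five graphs containing a Hamiltonian path, and equals $1$ for the remaining three graphs ($K_{1,3}$ and the two graphs with an isolated vertex).
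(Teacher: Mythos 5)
Your overall strategy (enumerate the eleven $4$-vertex graphs and apply the various parts of Theorem~\ref{thm:misc}, with an isolated-vertex reduction) is the natural one, and the paper itself gives no further proof hint. However, your existence criterion is wrong, and this creates a genuine gap. You assert that $t(P_2,H)>0$ iff $H$ has a vertex of degree at least $2$, but since $P_2$ is bipartite, there is already a homomorphism $P_2\to K_2$: with the path labelled $a$--$b$--$c$, send $a\mapsto u$, $b\mapsto v$, $c\mapsto u$. Thus $\hom(P_2,H)>0$ (and hence $t(P_2,H)>0$) as soon as $H$ has even a single edge. Of the three graphs you discarded, only the edgeless graph actually has $t(P_2,H)=0$; for $K_2$ with two isolated vertices and for the matching $2K_2$, the domination exponent exists and must be computed.

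Both overlooked cases are still easy with the tools at hand, so the corollary survives, but your write-up as it stands is incomplete. For $K_2$ plus two isolated vertices, your own isolated-vertex reduction gives $C(P_2,K_2\cup 2v)=C(P_2,K_2)$, and $C(P_2,K_2)=2$ either from Sidorenko for paths (here $P_2$ is bipartite and $e(P_2)=2$) or by reading the first case of Theorem~\ref{thm:Cpaths} with $k=2$, $\ell=1$. For $2K_2$, use $t(2K_2,T)=t(K_2,T)^2$ together with Lemma~\ref{lem:basicproperties}(2) to get $C(P_2,2K_2)=\tfrac12\,C(P_2,K_2)=1$. With these two entries added, the case analysis is complete; the rest of your tally (nonexistence only for the empty graph, $3/4$ for the five Hamiltonian-path graphs, $1$ for $K_{1,3}$ and for the two graphs with an isolated vertex on top of $P_2$ or $K_3$) is correct.
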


\begin{theorem}\label{lem:K4-eK3}
 We have that  $C(K_4 -e, K_3) =2$.
\end{theorem}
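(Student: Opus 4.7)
The plan is to establish $C(K_4 - e, K_3) = 2$ by combining Tao's lower bound $C(K_4 - e, K_3) \geq 2$ with a matching upper bound, namely the inequality $t(K_4 - e, T) \geq t(K_3, T)^2$ valid for every graph $T$. The upper bound is the substantive part, and it follows from a direct application of Cauchy--Schwarz that exploits the fact that $K_4 - e$ is two triangles glued along a common edge.

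Concretely, I would label the vertices of $K_4 - e$ as $a, b, c, d$ with the missing edge $cd$. A homomorphism of $K_4 - e$ into $T$ is then specified by the image $(x, y) = (\varphi(a), \varphi(b))$ of the shared edge $ab$, together with two independent choices of common neighbor of $x, y$ in $T$ for $\varphi(c)$ and $\varphi(d)$. In formula form,
\[
\hom(K_4 - e, T) = \sum_{x, y \in V(T)} \mathbb{1}[xy \in E(T)] \cdot |N_T(x) \cap N_T(y)|^2,
\]
and analogously
\[
\hom(K_3, T) = \sum_{x, y \in V(T)} \mathbb{1}[xy \in E(T)] \cdot |N_T(x) \cap N_T(y)|.
\]

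The next step is to apply Cauchy--Schwarz $\of{\sum_i b_i}^2 \leq |I| \sum_i b_i^2$ to the second sum, regarded as a sum over all $v(T)^2$ ordered pairs in $V(T) \times V(T)$ (with the edge indicator absorbed into the summand). This gives
\[
\hom(K_3, T)^2 \leq v(T)^2 \cdot \hom(K_4 - e, T).
\]
Dividing both sides by $v(T)^6$ converts this immediately into the density inequality $t(K_3, T)^2 \leq t(K_4 - e, T)$, which is precisely $C(K_4 - e, K_3) \leq 2$. Combined with Tao's lower bound, this yields the theorem.

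There is essentially no obstacle in this proof: once one recognizes the two-triangles-sharing-an-edge structure of $K_4 - e$, Cauchy--Schwarz is the natural tool and the argument writes itself. The one point worth being careful about is to sum over all of $V(T)^2$ rather than only over the $2e(T)$ ordered edges of $T$; this choice absorbs the edge indicator into the summand and produces the clean bound $t(K_3, T)^2 \leq t(K_4 - e, T)$, whereas restricting the sum to edges would introduce a spurious factor of $t(K_2, T) \leq 1$. Equivalently, in the gluing-algebra language of Section~\ref{sec:prelims}, if $F$ denotes the triangle with two adjacent vertices labeled $1$ and $2$, then $K_3 = \llbracket F \rrbracket$ and $K_4 - e = \llbracket F^2 \rrbracket$, and the inequality above is the standard Cauchy--Schwarz identity $t(\llbracket F \rrbracket, T)^2 \leq t(\llbracket F^2 \rrbracket, T)$ for partially labeled graphs.
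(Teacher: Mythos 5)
Your Cauchy--Schwarz argument is correct and cleanly proves $t(K_4-e,T)\geq t(K_3,T)^2$, which gives $C(K_4-e,K_3)\leq 2$. However, you have the two directions backwards, and as a result the harder half of the theorem is entirely missing. Tao's Exercise 16.21 result is the \emph{lower bound on the density} $t(K_4-e,W)\geq t(K_3,W)^2\log^*t(K_3,W)$, obtained via Cauchy--Schwarz plus the triangle removal lemma; this is a strengthening of the inequality you prove and, like yours, it establishes the \emph{upper} bound $C(K_4-e,K_3)\leq 2$. It says nothing about a matching construction.

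What you still owe the reader is the lower bound $C(K_4-e,K_3)\geq 2$, and this is not free: the elementary bounds from Lemma~\ref{lem:simple} give only $\max\bigl(\tfrac 53,\tfrac 32,\tfrac 43\bigr)<2$, so a genuine construction is required. The paper uses the Behrend graph $G_N$ on $3N$ vertices, a union of $N^{2-o(1)}$ edge-disjoint triangles with no other triangles. In $G_N$ every edge lies in a unique triangle, so every homomorphic image of $K_4-e$ collapses onto a single triangle and $\hom(K_4-e,G_N)=\hom(K_3,G_N)=N^{2-o(1)}$; dividing by $N^4$ and $N^3$ respectively gives $t(K_4-e,G_N)=N^{-2-o(1)}$ while $t(K_3,G_N)=N^{-1-o(1)}$, hence $\log t(K_4-e,G_N)/\log t(K_3,G_N)\to 2$. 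Without this (or an equivalent construction) your argument only proves the inequality $C(K_4-e,K_3)\leq 2$ and does not yield equality.
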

\begin{proof}
It is known that $t(K_4 - e, W) \geq t(K_3, W)^2 \log^* t(K_3, W)$ (claimed
to be proved by Tao, see \cite[Exercise 16.21]{LL12}). This is a simple
Cauchy-Schwarz inequality together with the bound in the triangle removal
lemma. This inequality implies that $C(K_4 -e, K_3)\leq 2$.

To show that $C(K_4 -e, K_3)\geq 2$, we use the following construction.
 Let $N$ be a prime number and $G_N$ be the
Berhend graph, that is a graph on $3N$ vertices that is a union of at least
$N^{2-o(1)}$ edge-disjoint triangles, which does not contain any other
triangles. Then all homomorphism images of  $K_4-e$ are trivially the
original triangles, so $t(K_4, G_N)$ is of the order of $N^{-2}$. On the
other hand, the density $t(K_3, G_N)$ is of the order $N^{-1}$. This proves
the lower bound of $2$.
\end{proof}

\begin{theorem}
Let $H$ be a triangle with a pendant edge. Then
    $C(H, K_3) = 3/2$.
\end{theorem}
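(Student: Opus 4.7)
The plan is to establish $C(H, K_3) = 3/2$ by proving the upper bound via Hölder's inequality applied to local triangle counts, and the matching lower bound via a disjoint union of triangles. Write $V(H) = \{r, a, b, p\}$, where $\{r, a, b\}$ induces the triangle and $rp$ is the pendant edge. For a target graph $T$ on $n = v(T)$ vertices, let $d_v$ denote the degree of $v$ and $\Delta_v$ the number of triangles through $v$. Counting homomorphisms of $H$ by the image of $r$ first and then independently picking the images of the triangle edge $ab$ (an ordered pair of neighbors of the image of $r$ that are adjacent, contributing $2\Delta_v$ choices) and of the pendant $p$ (any neighbor of the image of $r$, contributing $d_v$ choices) yields
\[
\hom(H, T) = 2 \sum_{v \in V(T)} d_v \Delta_v, \qquad \hom(K_3, T) = 2 \sum_{v \in V(T)} \Delta_v.
\]
After clearing the density normalization and squaring, the target inequality $t(H, T) \geq t(K_3, T)^{3/2}$ becomes
\[
n \cdot \Bigl( \sum_v d_v \Delta_v \Bigr)^2 \geq 2 \Bigl( \sum_v \Delta_v \Bigr)^3.
\]

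For the upper bound I would apply Hölder's inequality with exponents $3/2$ and $3$ to the factorization $\Delta_v = (d_v \Delta_v)^{2/3} \cdot (\Delta_v / d_v^2)^{1/3}$ (restricting, without loss of generality, to vertices with $d_v > 0$, since $d_v = 0$ forces $\Delta_v = 0$), obtaining
\[
\sum_v \Delta_v \leq \Bigl(\sum_v d_v \Delta_v\Bigr)^{2/3} \Bigl(\sum_v \Delta_v / d_v^2\Bigr)^{1/3}.
\]
Cubing and combining with the elementary bound $\Delta_v \leq \binom{d_v}{2} \leq d_v^2/2$, which yields $\sum_v \Delta_v / d_v^2 \leq n/2$, delivers the displayed inequality and hence $C(H, K_3) \leq 3/2$. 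For the matching lower bound, let $T_k$ be the disjoint union of $k$ triangles; every vertex has degree $2$ and lies in exactly one triangle, so the two identities above evaluate to $t(K_3, T_k) = \Theta(k^{-2})$ and $t(H, T_k) = \Theta(k^{-3})$, whence $\log t(H, T_k) / \log t(K_3, T_k) \to 3/2$ as $k \to \infty$, and so $C(H, K_3) \geq 3/2$.

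The main subtlety is identifying the correct Hölder decomposition. A more natural two-factor Cauchy--Schwarz using $d_v \Delta_v$ and $\Delta_v / d_v$ only gives $\sum_v \Delta_v / d_v \leq \tfrac{3}{2} \cdot \#\triangle$, which is insufficient because $\#\triangle$ can greatly exceed $n$. The three-factor version succeeds precisely because the auxiliary factor $\Delta_v / d_v^2$ is bounded by the absolute constant $1/2$ per vertex, controlling the corresponding sum by $n/2$ and producing exactly the $\sqrt{n}$ loss that the dictionary between counting and density inequalities predicts.
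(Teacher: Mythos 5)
Your proof is correct and follows essentially the same approach as the paper: both upper bounds rest on the elementary bound $\Delta_v \le \binom{d_v}{2}$ combined with a H\"older/power-mean step, and both lower bounds come from a simple construction realizing $(v(H)-1)/(v(K_3)-1)=3/2$ (the paper cites Lemma~\ref{lem:simple} with two disjoint $K_n$'s; you use $k$ disjoint triangles, which works equally well). Your reorganization---applying three-factor H\"older first with the auxiliary factor $\Delta_v/d_v^2 \le 1/2$, rather than substituting $d_v \gtrsim \sqrt{\Delta_v}$ into $\sum d_v\Delta_v$ and then invoking power means---is mathematically equivalent but gives an exact inequality with explicit constants, and incidentally sidesteps a typo in the paper's proof (where ``$\hom(K_3,T)\ge\Theta(\sum d_i^{3/2})$'' should clearly read $\hom(H,T)$).
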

\begin{proof}
Let $T$ be any graph on $n$ vertices. Let $d_i$ be the number of triangles that
vertex $i$ is incident to. Then $\hom(K_3, T) \sim \sum_{i=1}^n d_i$. For
each vertex $i$, note that its degree $\deg_i$ should satisfy
$\binom{\deg_i}{2} \geq d_i$. Thus $\hom(H, T) \geq \Theta(\sum
d_i^{3/2})$, and so $t(H,T) \geq t(K_3,T)^{\frac{3}{2}}$ by H\"older
inequality.  On the other hand, by Lemma \ref{lem:simple}, the exponent
$\frac{3}{2}$ is sharp.
\end{proof}


\section{Open questions}\label{sec:open}

\subsection{General questions}

Some lower bound constructions $\{T_n\}$ used in our paper, such as
(quasi)-random graphs or cliques of fixed density, have the property that both 
$\lim_{n\to\infty}\log t(G,T_n)$ and $ \lim_{n\to\infty}\log t(H,T_n)$ exist and
are non-zero. For some of them, notably for the most sophisticated
construction in the proof of Theorem \ref{thm:even_cycles}, this is not true.

In the former case, the sequence $\{T_n\}$ can be replaced, after going to a
convergent subsequence, with a limiting graphon that leads to a much cleaner
version. Hence it is natural to ask the following question.

\begin{problem}\label{p2}
For which pairs $G, H$ with $t(G, H)>0$ does there exist a graphon $W$ with
$0<t(H, W)<1$, such that $t(G, W) = t(H, W)^{C(G,H)}$?
\end{problem}

The simplest example where such a graphon does not exist is $G=K_2,\ H=P_2$.
Indeed, $C(K_2,P_2)=1$, e.g., by Theorem \ref{thm:misc} (\ref{thm:rho}). On
the other hand, for a given $\rho\in [0,1]$, the maximum value of $t(P_2,W)$
taken over all graphons $W$ with $t(K_2,W)=\rho$ was computed in \cite{AK};
it can be easily checked that it is always strictly less than $\rho$ as long
as $\rho\in (0,1)$.

On the contrary, if $H$ contains a perfect matching then, again by Theorem
\ref{thm:misc} (\ref{thm:rho}), we have $C(K_2,H)=\frac{v(K_2)}{v(H)}$ and
hence the required graphon is simply the half-clique (which is the limit of the union of $K_n$ with $n$ disjoint vertices), as in the proof of Lemma
\ref{lem:simple}. We mildly conjecture that when $G=K_2$, this sufficient
condition for the existence of the realizing graphon $W$ is also necessary.

Baek and Lee (personal communication) have more sophisticated examples of
pairs $(G, H)$ that do not have a realizing graphon $W$. For more details
please refer to their upcoming work.

\medskip

We would also like to re-iterate the question already asked by Stoner \cite[Question 6.3]{Stoner}.

\begin{problem}
Is it true that for any $G,H$ with $t(G,H)>0$, $C(G,H)$ is rational? Algebraic?
\end{problem}

Finally, we pose the following problem.
\begin{problem}
Is the set of all valid pure binomial inequalities of the form
\eqref{eq:bound} (say, with rational $c_1,\ldots,c_k$) algorithmically
decidable?

Given the identity $t(G,T)\cdot t(H,T) = t(GH,T)$, it is easily seen to be
equivalent to asking whether the set $\{(G,H,c)\ |\ C(G,H)\leq c\}$ is
decidable. Another sensible form of this question is to ask the same when $G$
and $H$ are additionally required to be connected.
\end{problem}

\subsection{Some concrete homomorphism density domination exponents to compute}

We were able to determine $C(C_k,C_\ell)$ asymptotically, but the question of
computing it for fixed odd values when $k>\ell$ remains.

\begin{problem}
Find $C(C_k,C_\ell)$ exactly when $k$ and $\ell$ are odd, and $k>l$.
\end{problem}

In Theorem~\ref{thm:misc}, we computed $C(P_2,H)$ when $H$ can be vertex-covered by vertex-disjoint paths of length at least two, and we also know $C(K_2, H)$ for an arbitrary $H$. The Sidorenko conjecture is equivalent to computing $C(G, K_2)$, again for an arbitrary $G$. This opens up an interesting ``anti-Sidorenko'' direction of attempting to determine $C(G,H)$, where $G$ is fixed and ``very simple'' and $H$ is arbitrary. The following is the next natural step in that direction:

\begin{problem}
Find $C(P_2,H)$ for any graph $H$.
\end{problem}
Theorem \ref{thm:misc} and the work  in \cite{DR} cover some of the cases of $H$.

\subsection{Cycle inequalities and the tropicalization of the number graph profile of cycles}

When trying to compute the tropicalization for $\mathcal{D}_{\mathcal{U}}$
for $\mathcal{U}=\{C_2, C_3, \ldots, C_\ell\}$, the following inequalities
cropped up. We could not prove them in general nor find a counterexample.
Proving that they are valid or understanding what kind of constructions
breaks them might allow us to make some progress towards better understanding
$C(C_{2k+1},C_{2\ell+1})$.

\begin{problem}\label{lem:newineqevenoddodd}
Does the inequality
\[t(C_{2j},T)^2 t(C_{2i+1},T)^{2i-1-2j} \geq t(C_{2i-1},T)^{2i+1-2j}\] hold for all graphs $T$ when $i>j\geq 1$?

Note that this inequality is vertex-balanced, so it is equivalent to the same
inequality where we replace $t(\cdot)$ with $\hom(\cdot)$.
\end{problem}

We can show partial progress towards this conjecture, proving the case when $j=1, i=2$, as well as cases that would be consequences of the conjecture holding for other values. Following the notation of Kopparty and Rossman \cite{KR}, the homomorphism domination number exponent of $F$ and $G$, which is denoted as $\HDE(F, G)$, is defined as the maximal real number $c$ such that $\hom(F, T)\geq \hom(G, T)^c$ for all target graphs $T$.

\begin{lemma}
We have that $$t(C_{2},T)^{2i-2} t(C_{2i+1},T) \geq t(C_{3},T)^{2i-1}$$ for all $i\geq 2$.
\end{lemma}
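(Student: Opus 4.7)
The plan is to prove the lemma by induction on $i\geq 2$, using two tools: (i) the pruning-plus-tensor-trick consequence of equation~\eqref{eq:pruning} from the proof of Theorem~\ref{thm:odd_cycles}, and (ii) the spectral Cauchy--Schwarz inequality
\[
t(C_3,T)^2 \;\leq\; t(C_2,T)\cdot t(C_4,T),
\]
which follows from $(\sum_j\lambda_j^3)^2\leq(\sum_j\lambda_j^2)(\sum_j\lambda_j^4)$ together with the trace identity $\hom(C_m,T)=\sum_j\lambda_j^m$.

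The first step is to upgrade \eqref{eq:pruning} to a clean, polylog-free form. Applying \eqref{eq:pruning} to the tensor power $T^{\otimes r}$, using the multiplicativity $t(H,T^{\otimes r})=t(H,T)^r$ for every graph $H$, taking the $r$-th root, and letting $r\to\infty$ absorbs the $\wt\Omega$ polylog factor. This is the argument of Lemma~\ref{lem:relaxation}, adapted to the multi-term right-hand side of \eqref{eq:pruning}; the adaptation is straightforward because every density that appears scales multiplicatively under tensoring. The resulting inequality is, for every graph $T$ and every $0\leq\ell\leq k$,
\[
t(C_{2k+1},T)\cdot t(C_2,T)\;\geq\; t(C_{2\ell+1},T)\cdot t(C_{2k-2\ell+2},T). \qquad (\star)
\]
Verifying this multi-term extension of the tensor trick is the step I expect to require the most care, since Lemma~\ref{lem:relaxation} is stated only for a single-power right-hand side.

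With $(\star)$ in hand, the induction is routine. For the base case $i=2$, I specialize $(\star)$ to $\ell=1,\ k=2$, which gives $t(C_5)\geq t(C_3)\,t(C_4)/t(C_2)$, and combine with Cauchy--Schwarz:
\[
t(C_2)^2\,t(C_5)\;\geq\; t(C_2)\,t(C_3)\,t(C_4)\;\geq\; t(C_3)\cdot t(C_3)^2\;=\;t(C_3)^3.
\]
For the inductive step (any $i\geq 3$), assuming the lemma for $i-1$, I specialize $(\star)$ to $\ell=i-1,\ k=i$, which gives $t(C_{2i+1})\geq t(C_{2i-1})\,t(C_4)/t(C_2)$, and regroup:
\[
t(C_2)^{2i-2}\,t(C_{2i+1})\;\geq\;\bigl[t(C_2)\,t(C_4)\bigr]\cdot\bigl[t(C_2)^{2i-4}\,t(C_{2i-1})\bigr]\;\geq\; t(C_3)^2\cdot t(C_3)^{2i-3}\;=\;t(C_3)^{2i-1},
\]
applying Cauchy--Schwarz to the first bracket and the inductive hypothesis to the second.
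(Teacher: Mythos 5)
Your step from \eqref{eq:pruning} to the clean inequality $(\star)$ is the gap, and it is the crux of the whole argument. The tensor trick (Lemma~\ref{lem:relaxation}) would indeed absorb the $\widetilde\Omega$ factor \emph{if} \eqref{eq:pruning} held for every finite graph $T$, but it does not: that inequality is only established for the \emph{pruned} graph $T'$. The ``without loss of generality $T'=T$'' step in the proof of Theorem~\ref{thm:odd_cycles} is justified only because the \emph{final} inequality there has the single-term form $t(C_{2k+1},T)\geq\widetilde\Omega\bigl(t(C_{2\ell+1},T)^c\bigr)$, and both sides of this transfer cleanly back from $T'$ to $T$ (pruning is a spanning subgraph, so $\hom(C_{2k+1},T)\geq\hom(C_{2k+1},T')$, and the pruning was chosen precisely so that $\hom(C_{2\ell+1},T')=\Omega(\hom(C_{2\ell+1},T))$). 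The even-cycle factor $t(C_{2k-2\ell+2},T)$ that appears in $(\star)$ does not transfer: pruning removes edges chosen to concentrate odd-cycle homomorphisms, and this can destroy an arbitrarily large fraction of the $C_{2k-2\ell+2}$-homomorphisms.

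In fact $(\star)$ is false as a universal inequality. Take $T=K_m\sqcup K_{N,N}$ with $N\gg m$, and consider $(\star)$ with $\ell=1,\ k=2$, i.e.\ $t(C_5,T)\,t(C_2,T)\geq t(C_3,T)\,t(C_4,T)$. Since $C_3$ and $C_5$ see only the $K_m$ component while $C_4$ and $K_2$ are dominated by the bipartite component, one computes (with $n=m+2N$)
\[
t(C_5,T)\,t(C_2,T)\asymp \frac{m^5\,N^2}{n^7},
\qquad
t(C_3,T)\,t(C_4,T)\asymp \frac{m^3\,N^4}{n^7},
\]
so the ratio is $\asymp (m/N)^2\to 0$. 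This also violates the polylog form \eqref{eq:pruning} for this $T$ once $m$ grows slower than $N/\mathrm{polylog}(N)$, so the hypothesis of the tensor trick fails. The same $T$ shows that your inductive step's use of $(\star)$ with $\ell=i-1,\ k=i$ breaks as well, so both the base case and the induction are unsound.

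Your spectral Cauchy--Schwarz ingredient $t(C_3,T)^2\leq t(C_2,T)\,t(C_4,T)$ is correct, but it is not enough to rescue the argument without a valid replacement for $(\star)$. The paper's proof takes a completely different route: it replaces $C_{2i+1}$ by a chordal series-parallel homomorphic image $C_{2i+1}^c$ (the cycle with all chords from one vertex), invokes the Kopparty--Rossman linear-programming characterization of the homomorphism domination exponent for chordal series-parallel graphs to show $\hom(C_2,T)^{2i-2}\hom(C_{2i+1}^c,T)\geq\hom(C_3,T)^{2i-1}$, and then uses the surjection $C_{2i+1}\to C_{2i+1}^c$ to transfer the bound. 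No pruning or tensor argument is involved.
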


\begin{proof}
We show instead that $\hom(C_{2},T)^{2i-2} \hom(C^c_{2i+1},T) \geq \hom(C_{3},T)^{2i-1}$ where $C^c_{2i+1}$ is the cycle with vertices $[2i+1]$ placed cyclically around the cycle and with chords $\{1,j\}$ for $j \in \{3,4, \ldots, 2i\}$. We let the vertices of $C_3$ be $\{1',2',3'\}$. Since all graphs in that inequality are chordal and series-parallel, Theorem 3.3 of \cite{KR} applies and we use their linear program to compute $\HDE(C_{2}^{2i-2} C^c_{2i+1},C_3)$:

\begin{align*}
\min \ & z\\
\textup{such that } & -p(1',3') + p(1') + p(3') \geq 0.0\\
& -p(1',2') + p(1') + p(2')\geq 0\\
&-p(2',3') + p(2') + p(3')\geq 0\\
& -p(1',2',3') + p(1',2') + p(1',3') -p(1')\geq 0\\
& -p(1',2',3') + p(1',2') + p(2',3') -p(2')\geq 0\\
&-p(1',2',3') + p(1',3') + p(2',3') -p(3')\geq 0\\
&p(1',2',3') = 1\\
&(2i-1) p(1',2',3') + (i-1) p(1',2') -(i-1) p(1',3') -z\leq 0\\
&(2i-1) p(1',2',3') -(i-1) p(1',2') + (i-1) p(1',3') -z\leq 0\\
&(2i-1)  p(1',2',3') -(i-1) p(1',2') -(i-1) p(1',3') + (2i-2) p(2',3') -z\leq 0\\
&(2i-1)  p(1',2',3') + (i-1) p(1',2') -(i-1) p(2',3') -z\leq 0\\
&(2i-1)  p(1',2',3') -(i-1) p(1',2') + (i-1) p(2',3') -z\leq 0\\
&(2i-1)  p(1',2',3') -(i-1) p(1',2') + (2i-2) p(1',3') -(i-1) p(2',3') -z\leq 0\\
&(2i-1)  p(1',2',3') + (i-1) p(1',3') -(i-1) p(2',3') -z\leq 0\\
&(2i-1)  p(1',2',3') -(i-1) p(1',3') + (i-1) p(2',3') -z\leq 0\\
&(2i-1)  p(1',2',3') + (2i-2) p(1',2') -(i-1) p(1',3') -(i-1) p(2',3') -z\leq 0.
\end{align*}

The first seven inequalities are the polymatroidal inequalities for $C_3$. The remaining inequalities are of the form $$\sum_{S\subseteq \textup{MaxCliques}(C_{2}^{2i-2}C_{2i+1}^c)} -(-1)^{|S|} p(\varphi(\cap S))$$ for every homomorphism $\varphi\in \Hom(C_{2}^{2i-2}C_{2i+1}^c,C_3)$ where $\textup{MaxCliques}(C_{2}^{2i-2}C_{2i+1}^c)$ is the set of maximal cliques of $C_{2}^{2i-2}C_{2i+1}^c$. It suffices to consider homomorphisms where the $2i-2$ copies of $C_2$ go to the same edge of $C_3$.  So for example, again letting the vertices of $C_{2i+1}^c$ be $[2j+1]$ as explained above and the vertices of $C_2$ being $\{2j+2,2j+3\}$, and those of $C_3$ being $\{1',2',3'\}$, the inequality $(2i-1) p(1,2,3) + (i-1) p(1,2) -(i-1) p(1,3) -z\leq 0$ comes from the homomorphism $\varphi$ such that $\varphi(1)=1'$, $\varphi(2j)=2'$ for $1 \leq j \leq i$, $\varphi(2j+1)=3'$ for $1 \leq j \leq i$, $\varphi(2j+2)=1'$ and $\varphi(2j+3)=2'$. The maximal cliques in $C_{2i+1}^c$ are the $2i-1$ triangles. In $\varphi$, all of those triangles go to the triangle $(1',2',3')$; this explains the term $(2i-1)p(1,2,3)$. There are $\binom{2i-1}{2}$ ways of choosing two cliques among the maximal cliques of $C_{2i+1}^c$. In $2i-2$ cases, we get the edges $\{1,j\}$ for $3\leq j \leq 2i$; $i-1$ of these edges go to edge $\{1',2'\}$ and $i-1$ go to edge $\{1',3'\}$, and so we add $-(i-1)p(1',2')-(i-1)p(1',3')$. In the remaining cases, we get vertex $1$. Note that vertex $1$ is also obtained as the intersection of any three or more maximal cliques of $C_{2i+1}^c$, and so $p(1)$ appears with coefficient $-(\binom{2i-1}{2}-(2i-2))+\binom{2i-1}{3}-\binom{2i-1}{4}+\ldots -(-1)^{2i-1}\binom{2i-1}{2i-1}=-\binom{2i-1}{0}+\binom{2i-1}{1}-\binom{2i-1}{2}+\binom{2i-1}{3}-\binom{2i-1}{4}+\ldots+\binom{2i-1}{2i-1}=0$. Finally, note that $C_2$ only has one maximal clique,  namely itself, for which all copies go to $\{1',2'\}$, and so we add $(2i-2)p(1',2')$. (Having the $C_2$ go to $\{1',3'\}$ or $\{2',3'\}$ would respectively yield the next two inequalities.) The other inequalities are obtained in similar fashion.

Note that
\begin{align*}
-(2i-1)\cdot & (p(1,2,3)=1)\\
\frac{1}{2} \cdot & ((2i-1)p(1,2,3)-(i-1)p(1,3)+(i-1)p(1,3)\leq z)\\
\frac{1}{2} \cdot & ((2i-1)p(1,2,3)+(i-1)p(1,3)-(i-1)p(1,3)\leq z)
\end{align*}
yields $z\geq 2i-1$, so we see that $\HDE(C_{2}^{2i-2} C^c_{2i+1},C_3)\geq 2i-1$. To show that $\HDE(C_{2}^{2i-2} C^c_{2i+1},C_3)=2i-1$, one can simply take a blow-up of $C_3$ where each vertex becomes a stable set with $n$ vertices, where edges are present with probability $n^{-1}$, and where triangles are present with probability $n^{-2}$.

Since there is a surjective homomorphism from $C_{2i+1}$ to $C^c_{2i+1}$, we have that  $$\hom(C_{2},T)^{2i-2} \hom(C_{2i+1},T)\geq \hom(C_{2},T)^{2i-2} \hom(C^c_{2i+1},T) \geq \hom(C_{3},T)^{2i-1},$$ and so the result holds.
\end{proof}

Should the answer to Problem ~\ref{lem:newineqevenoddodd} be yes, we could
improve the upper bound for $C(C_{2i+1},C_{2i-1})$. Indeed, using
\eqref{eq:pruning} (with $k\mapsto i, \ell\mapsto i-1$) and $t(C_4,T)\geq
t(K_2,T)^4$, we would obtain an upper bound of
              $$C(C_{2i+1}, C_{2i-1}) \leq \frac{2i - 1/3}{2i - 7/3}.$$

              Moreover, we claim we would get the following tropicalization for the number profile of all cycles, where the \emph{graph number profile} of $\mathcal{U}=\{C_2, C_3, C_4, C_5, \ldots C_{2m}\}$ is the set of vectors $$(\hom(C_2, T), \hom(C_3, T), \ldots, \hom(C_{2m},T))$$ as $T$ varies over all
              graphs.

Let $\mathcal{U}=\{C_2, C_3, C_4, C_5, \ldots C_{2m}\}$, let $\NU$ be the
graph homomorphism number profile of $\U$, and let $$Q_{\U}=\left\{ \begin{array}{lll}
(y_2,y_3,\ldots, y_{2m})\in\mathbb{R}^{2m-1} | & y_{2i-2}-2y_{2i}+y_{2i+2} \geq 0 & (2 \leq i \leq m-1)\\
& y_{2i}-2y_{2i+1}+y_{2i+2} \geq 0 &  (1 \leq i \leq m-1)\\
& 2y_{2i} - (2j+1-2i)y_{2i-1}+(2i-1-2i)y_{2j+1} \geq 0 &  (1 \leq i < j  < m)\\
& -y_{2i-1}+y_{2i+1} & (2 \leq i \leq m-1)\\
& y_3 \geq 0 & \\
&-y_{3}+y_4 \geq 0 & \\
& -y_{2} + y_{4} \geq 0 &\\
& m \cdot y_{2m-2} - (m-1) \cdot y_{2m} \geq 0 &
\end{array} \right\}.$$

It is easy to check that $Q_{\U}$ is the convex hull of the following rays:
$\mathbf{r}_{2i+1}=(r_2, r_3, \ldots, r_{2m})$ for $1\leq i \leq m$ and
$\mathbf{s}_{2i+1}=(s_2, s_3, \ldots, s_{2m})$ for $1 \leq i \leq m$ where

$$r_j=\left\{\begin{array}{ll}
0 & \textup{if } j \textup{ is odd and less than } 2i+1, \textup{ and}\\
j & \textup{otherwise,}
\end{array}\right.$$

and

$$s_j= \left\{\begin{array}{ll}
0 & \textup{if } j \textup{ is odd and less than } 2i+1, \textup{ and}\\
1& \textup{otherwise.}
\end{array}\right. $$

We skip an easy proof of the following equivalence.
\begin{lemma}
Problem \ref{lem:newineqevenoddodd} has an affirmative solution if and only
if $\trop(\NU) =Q_{\U}$.
\end{lemma}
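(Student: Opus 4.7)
The lemma amounts to identifying $\trop(\NU)$ with $Q_\U$, and (modulo an apparent typo in the paper's formula) the third family of defining inequalities of $Q_\U$ is precisely the logarithmic form of the inequality in Problem~\ref{lem:newineqevenoddodd}. The reverse implication ($\Leftarrow$) is then immediate: if $\trop(\NU) = Q_\U$, the facet of $Q_\U$ corresponding to this family is valid on $\trop(\NU)$, hence is a valid pure binomial inequality on $\NU$; being vertex-balanced, it restates verbatim in homomorphism densities, giving Problem~\ref{lem:newineqevenoddodd}.

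For the forward implication ($\Rightarrow$), my plan is to prove both inclusions in $\trop(\NU) = Q_\U$ separately. For $\trop(\NU) \subseteq Q_\U$, I would verify each defining inequality of $Q_\U$ as a valid pure binomial inequality on $\NU$: the log-convexity families $y_{2i-2}-2y_{2i}+y_{2i+2} \geq 0$ and $y_{2i}-2y_{2i+1}+y_{2i+2} \geq 0$ reduce to Cauchy--Schwarz applied to the spectral identity $\hom(C_a,T) = \sum_k \lambda_k^a$ (as in the proof of~\eqref{eq:concavity}); odd-cycle monotonicity $y_{2i+1} \geq y_{2i-1}$ and the boundary inequalities $y_3,\,y_4 - y_3,\,y_4 - y_2 \geq 0$ reduce to standard spectral/combinatorial comparisons, as does the terminal Sidorenko-type $m\,y_{2m-2} \geq (m-1)\,y_{2m}$; and the one family not thus covered is the third, which is Problem~\ref{lem:newineqevenoddodd} by hypothesis.

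For $\trop(\NU) \supseteq Q_\U$, using the extreme-ray decomposition from the discussion preceding the lemma, it suffices to exhibit a graph sequence realizing each ray. To realize $\mathbf r_{2i+1}$, I would take $T_n$ to be the disjoint union of a blow-up (by independent sets of size $\lfloor n/v(H) \rfloor$) of a fixed non-bipartite graph $H$ of odd girth exactly $2i+1$ with a single extra triangle; blow-ups preserve odd girth, so the blow-up contributes $\Theta(n^j)$ to $\hom(C_j, T_n)$ when $j$ is even or $j \geq 2i+1$ is odd and $0$ otherwise, while the attached triangle adds $\Theta(1)$ across all coordinates to keep everything strictly positive in the tropical limit. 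To realize $\mathbf s_{2i+1}$, take $T_n$ to be $\lfloor n/(2i+1) \rfloor$ vertex-disjoint copies of $C_{2i+1}$ together with an extra triangle; a direct count yields $\hom(C_j,T_n) = \Theta(n)$ precisely when $s_{j,2i+1} = 1$ and $\Theta(1)$ otherwise. The principal obstacle is this ray-realization step: one must verify coordinate-by-coordinate that the extreme-ray exponents are matched by the proposed constructions, which relies on the existence of non-bipartite graphs of arbitrarily large odd girth (e.g., by Erd\H os's classical probabilistic argument) together with the spectral/combinatorial count that odd closed walks shorter than the base graph's odd girth vanish while longer ones appear with the right scaling in $n$.
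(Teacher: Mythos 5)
The paper deliberately omits this proof (``We skip an easy proof''), so there is no written argument to compare against; your proposal supplies a correct and essentially complete one along the only natural route. The logical skeleton is right: the reverse implication is immediate from the fact that (after correcting the paper's typo) the third defining family of $Q_\U$ is exactly the log form of Problem~\ref{lem:newineqevenoddodd} and is vertex-balanced, hence passes freely between homomorphism numbers and densities; the forward implication reduces to $\trop(\NU)\subseteq Q_\U$ (every other defining inequality of $Q_\U$ is unconditional --- spectral log-convexity for consecutive even/odd cycles, monotonicity of closed-walk counts via back-and-forth extension, and the Erd\H{o}s--Simonovits bound $\hom(C_{2m-2},T)^m\geq\hom(C_{2m},T)^{m-1}$) plus the unconditional $\trop(\NU)\supseteq Q_\U$, which follows once each extreme ray of $Q_\U$ is realized by a target-graph sequence. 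Your constructions do realize $\mathbf r_{2i+1}$ and $\mathbf s_{2i+1}$ correctly, since $\hom(C_j,H^{(n)})=n^j\hom(C_j,H)$ for a uniform blow-up $H^{(n)}$, $\hom(C_j,\sqcup_n H)=n\,\hom(C_j,H)$, and $\hom(C_j,H)>0$ iff $j$ is even or $j$ is odd and at least the odd girth of $H$, while the disjoint triangle keeps all coordinates strictly positive. One small simplification: invoking Erd\H{o}s's probabilistic construction of large-girth non-bipartite graphs is overkill here --- $C_{2i+1}$ itself is non-bipartite with odd girth exactly $2i+1$, so taking its blow-up (or, for $\mathbf s_{2i+1}$, many disjoint copies of it), disjointly unioned with a triangle, already works.
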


\begin{remark}
Thus, if the answer to problem ~\ref{lem:newineqevenoddodd} is affirmative, then there are
no other interesting pure binomial inequalities in homomorphism numbers of
cycles (without using the vertex). For density inequalities, this means that
all remaining interesting inequalities to be found are not vertex-balanced
(since the vertex-balanced ones would give an inequality in cycle numbers
without using the vertex).
\end{remark}

\section*{Acknowledgments} We are grateful to Ingu Baek and Joonkyung Lee for
pointing out an issue with the original statement of Problem \ref{p2}.


\begin{thebibliography}{99}
\bibitem{AK} R. Ahlswede and G.O.H. Katona.
\newblock Graphs with maximal numbers of adjacent pairs of edges.
\newblock {\em Acta Math. Acad. Sci. Hungar.}, 32:97-120, 1978.

\bibitem{tropalg}
D. Alessandrini.
\newblock Logarithmic limit sets of real semi-algebraic sets.
\newblock \emph{Adv. Geom.} 13(2013), no. 1, 155–190

\bibitem{Alo}
N. Alon.
\newblock On the number of subgraphs of prescribed type of graphs with a given number of edges.
\newblock \emph{Israel J. Math}, 38:116-130, 1981.

\bibitem{BR}
G. Blekherman and A. Raymond.
\newblock A Path Forward: Tropicalization in Extremal Combinatorics.
\newblock {\em Advances in Mathematics}, 407:1--68, 2022.

\bibitem{BR2} G. Blekherman and A. Raymond.
\newblock A New Proof of the Erd\H{o}s–Simonovits Conjecture on Walks.
\newblock {\em Graphs and Combinatorics}, 39:art. 53, 2023.

\bibitem{BRST}
G. Blekherman, A. Raymond, M. Singh, and R.R. Thomas.
\newblock Tropicalization of Graph Profiles.
\newblock {\em Transactions of the American Mathematical Society}, 375(9):6281--6310, 2022.

\bibitem{BRW}
G. Blekherman, A. Raymond, and F. Wei.
\newblock Undecidability of polynomial inequalities in weighted graph homomorphism densities.
\newblock {\em Forum Math. Sigma}, 12 (2024): Paper No. e40.

\bibitem{Bol}
   B.  Bollob\'as.
     \newblock Relations between sets of complete subgraphs.
    \newblock {\em Proc. Fifth British Comb. Conference}, 79--84, 1975.

\bibitem{MW}
H. Chen,  Y. Lin, J. Ma, and F. Wei.
\newblock Undecidability of polynomial inequalities in tournaments.
\newblock arXiv:2412.04972.


\bibitem{CFS}
D. Conlon, J. Fox, and B. Sudakov.
\newblock An Approximate Version of Sidorenko's Conjecture.
\newblock {\em Geometric and Functional Analysis}, 20(6): 1354--1366, 2010.

\bibitem{CL1}
D. Conlon and J. Lee.
\newblock Domination inequalities and dominating graphs.
\newblock arXiv:2303.01997.

\bibitem{DR} M. Dascălu and A. Raymond, Tropicalizing the Graph Profile of Some Almost-Stars, {\it SIAM Journal on Discrete Mathematics}, 2025.

\bibitem{FLS07}
M. Freedman, L. Lov{\'a}sz, and A. Schrijver.
\newblock Reflection positivity, rank connectivity, and homomorphism of graphs.
\newblock {\em J. Amer. Math. Soc.}, 20(1):37--51, 2007.

\bibitem{HN}
H. Hatami, and S. Norine.
\newblock Undecidability of linear inequalities in graph homomorphism densities.
\newblock {\em J. Amer. Math. Soc.}, 24(2):547--565, 2011.

\bibitem{fracmatching}
S. Janson, K. Oleszkiewicz, and A. Ruci\'{n}ski.
\newblock Upper tails for subgraph counts in random graphs.
\newblock {\em Isr. J. Math.}, 142:61--92, 2004.

\bibitem{KR}
S. Kopparty and B. Rossman.
\newblock The homomorphism domination exponent.
\newblock {\em European Journal of Combinatorics}, 32(7):1097--1114, 2011.




\bibitem{LLP}
L. Lov{\'a}sz.
\newblock Graph homomorphisms: Open problems.
\newblock Manuscript available at \url{https://web.cs.elte.hu/lovasz/problems.pdf}.

\bibitem{LL11}
 L. Lov{\'a}sz.
 \newblock Subgraph densities in signed graphons and the local Sidorenko
 conjecture.
 \newblock \emph{Electronic Journal of Combinatorics}, 18(1) 2011.
\bibitem{LL12}
 L. Lov{\'a}sz.
 \newblock{\emph{Large Networks and Graph Limits.}}
 \newblock Amer. Math. Soc. Colloq. Publ. 60, AMS, Providence, RI, 2012.


\bibitem{LS12}
 L. Lov{\'a}sz and B. Szegedy.
 \newblock{``Random graphons and a weak Positivstellensatz for graphs."}
 \newblock{\emph{J. Graph Theory}} 70(2)  (2012): 214--225.

\bibitem{Niki}
V. Nikiforov.
\newblock The number of cliques in graphs of given order and size.
\newblock \emph{Trans. Amer. Math. Soc.} 363 (2011), 1599–1618.

\bibitem{R07}
A.A. Razborov.
\newblock Flag algebras.
\newblock {\em J. Symbolic Logic}, 72(4):1239--1282, 2007.

\bibitem{triangle}
A.A. Razborov.
\newblock On the minimal density of triangles in graphs.
\newblock {\em Combin. Probab. Comput.}   17 (2008), no. 4, 603–618.

\bibitem{Reiher}
C. Reiher.
\newblock The clique density theorem.
\newblock {\em Ann. of Math.} (2)   184 (2016), no. 3, 683–707.

\bibitem{saglam}
M. Sa{\u{g}}lam.
\newblock Near Log-Convexity of Measured Heat in (Discrete) Time and Consequences.
\newblock {\em 59th {IEEE} Annual Symposium on Foundations of Computer Science, {FOCS} 2018, Paris, France, October 7-9, 2018}, 967--978, 2018.

\bibitem{Sid93}
A. Sidorenko.
\newblock A correlation inequality for bipartite graphs.
\newblock {\em Graphs and Combinatorics}, 9:201--204, 1993.

\bibitem{Sid94}
A. Sidorenko.
\newblock Partially ordered set of functionals corresponding to graphs.
\newblock {\em Discrete Math}, 131:263-277, 1994.

\bibitem{Stoner}
C. Stoner.
\newblock The graph density domination exponent.
\newblock {\em arXiv:2211.09870}, 2022.



\end{thebibliography}
\end{document}